\numberwithin{equation}{section}
\newtheoremstyle{mytheorem}
{}
{}
{\it}
{\parindent}
{\bf}
{.}
{ }
{\thmnumber{#2.~}\thmname{#1}\thmnote{~\rm#3}}
\newtheoremstyle{myremark}
{}
{}
{\rm}
{\parindent}
{\bf}
{.}
{ }
{\thmnumber{#2.~}\thmname{#1}\thmnote{~\rm#3}}
\newtheoremstyle{myparagraph}
{}
{}
{\rm}
{\parindent}
{\bf}
{}
{ }
{\thmnumber{#2.~}\thmname{#1}\thmnote{#3}}
\theoremstyle{mytheorem}
\newtheorem{theorem}[subsection]{Theorem}
\newtheorem{lemma}[subsection]{Lemma}
\newtheorem{proposition}[subsection]{Proposition}
\theoremstyle{myremark}
\newtheorem*{remark*}{Remark}
\theoremstyle{myparagraph}
\newtheorem*{parag*}{}
\def\@secnumfont{\sc}
\def\section{\@startsection{section}{1}%
\z@{1.5\linespacing\@plus .2\linespacing}{.7\linespacing}%
{\normalfont\sc\centering}}
\def\ps@headings{\ps@empty
 \def\@evenhead{%
  \setTrue{runhead}%
  \normalfont\footnotesize
  \rlap{\thepage}\hfil
  \def\thanks{\protect\thanks@warning}%
  \leftmark{}{}\hfil}%
 \def\@oddhead{%
  \setTrue{runhead}%
  \normalfont\footnotesize\hfil
  \def\thanks{\protect\thanks@warning}%
  \rightmark{}{}\hfil \llap{\thepage}}%
\let\@mkboth\markboth}
\renewenvironment{proof}[1][\proofname]{\par
  \pushQED{\qed}%
  \normalfont \topsep6\p@\@plus6\p@\relax
  \trivlist
  \itemindent\normalparindent
  \item[\hskip\labelsep
    \scshape
    #1\@addpunct{.}]\ignorespaces
}{%
  \popQED\endtrivlist\@endpefalse
}
\providecommand{\proofname}{Proof}
\newcommand{\R}{\mathbb{R}}
\newcommand\res{\mathop{\hbox{\vrule height 7pt width .5pt depth 0pt
\vrule height .5pt width 6pt depth 0pt}}\nolimits}
\newcommand{\cH}{{\mathcal{H}}}
\newcommand\F{{\mathbb F}}
\newcommand\N{{\mathbb N}}
\newcommand\M{{\mathbf M}}
\begin{document}

	%
\pagestyle{empty}
\pagestyle{myheadings}
\markboth%
{\underline{\centerline{\hfill\footnotesize%
\vphantom{,}\hfill}}}%
{\underline{\centerline{\hfill\footnotesize%
\textsc{Stable extremal submanifolds}%
\vphantom{,}\hfill}}}

	%
\thispagestyle{empty}

~\vskip -1.1 cm

	%

\vspace{1.7 cm}

	%
{\Large\sl\centering
Quantitative minimality of strictly stable extremal\\ submanifolds in a flat neighbourhood
\\
}

\vspace{.4 cm}

	%
\centerline{\sc Dominik Inauen \& Andrea Marchese}

\vspace{.8 cm}

{\rightskip 1 cm
\leftskip 1 cm
\footnotesize
{\sc Abstract.}
In this paper we extend the results of \textit{A strong minimax property of nondegenerate minimal submanifolds}, by White, where it is proved that any smooth, compact submanifold, which is a strictly stable critical point for an elliptic parametric functional, is the unique minimizer in a certain geodesic tubular neighbourhood. We prove a similar result, replacing the tubular neighbourhood with one induced by the flat distance and we provide quantitative estimates. Our proof is based on the introduction of a penalized minimization problem, in the spirit of \textit{A selection principle for the sharp quantitative isoperimetric inequality}, by Cicalese and Leonardi, which allows us to exploit the regularity theory for almost minimizers of elliptic parametric integrands.

\vspace{4pt}
\noindent \textsc{Keywords:} Minimal Surfaces, Geometric Measure Theory, Integral currents.

\vspace{4pt}
\noindent \textsc{AMS subject classification (2010):} 49Q05, 49Q15.

}

%
%
\section{Introduction}
It is well known that any strictly stable critical point of a smooth function $f:\R^n\to\R$ is locally its unique minimizer. In \cite{Wh}, B. White proves a statement of similar nature in a space of submanifolds of a Riemannian manifold, where the function $f$ is replaced by an elliptic parametric functional. In his setting the term ``locally'' above should be intended with respect to the \textit{strong} topology induced by the Riemannian distance. In the present paper we improve such result, replacing the strong topology with the one induced by the flat distance and providing also quantitative estimates. More precisely we prove the following result, where by $\mathbb{F}(T)$ we denote the flat norm of the integral current $T$. 

\begin{theorem}\label{th1_w}
Let $M^{m}$ be a smooth, compact Riemannian manifold (or $M =  \R^{m}$)\footnote{More generally, it suffices to require that there exists an embedding of $M$ into $\R^d$ and a tubular neighbourhood of $M$ which admits a Lipschitz projection $\pi$ onto $M$. Indeed, with such assumption it is possible to recast the problem in the Euclidean setting, via the machinery introduced in \cite[\S 8]{Wh2} and the technique used in Lemma \ref{lemmaflat}.} and suppose that $\Sigma^n\subset M^m$ is a smooth, embedded, compact, oriented submanifold with (possibly empty) boundary which is a strictly stable critical point for a smooth, elliptic parametric functional $F$. Then there exist $\varepsilon>0$ and $C>0$ (depending on $\Sigma$ and  $M$) such that 
\begin{equation}\label{e:main_estimate}
F(S)\geq F(\Sigma)+C(\mathbb{F}(S-\Sigma))^2,
\end{equation}
whenever $0<\mathbb{F}(S-\Sigma)\leq\varepsilon$ and $S$ is an integral current on $M$, homologous to $\Sigma$.
\end{theorem}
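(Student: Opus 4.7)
My plan is to argue by contradiction, adapting the selection-principle technique of Cicalese--Leonardi to replace a hypothetical bad minimizing sequence by almost-minimizers of $F$ with strong regularity, and then to exploit the quadratic expansion of $F$ around $\Sigma$ combined with strict stability. Assume \eqref{e:main_estimate} fails: there is a sequence of integral currents $S_k$ homologous to $\Sigma$, with $0<\delta_k:=\mathbb{F}(S_k-\Sigma)\to 0$ and
\[
F(S_k)<F(\Sigma)+\tfrac{1}{k}\,\delta_k^2.
\]
For each such $k$, I would select a minimizer $T_k$ of the penalized functional
\[
\cF_k(T):=F(T)+\Lambda\,\bigl|\mathbb{F}(T-\Sigma)-\delta_k\bigr|
\]
over integral currents $T$ on $M$ homologous to $\Sigma$ with $\mathbb{F}(T-\Sigma)\le\varepsilon_0$, with $\varepsilon_0>0$ small and $\Lambda>0$ large, both depending only on $F$ and $\Sigma$. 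Existence follows from Federer--Fleming compactness and lower semicontinuity of $F$ in the flat topology. Comparing $\cF_k(T_k)$ with $\cF_k(S_k)=F(S_k)$ and, when convenient, with $\cF_k(\Sigma)=F(\Sigma)+\Lambda\delta_k$, and invoking the ellipticity of $F$, one shows that for $k$ large $T_k\neq\Sigma$, that $\mathbb{F}(T_k-\Sigma)$ is comparable to $\delta_k$, and that $F(T_k)-F(\Sigma)<\tfrac{1}{k}\delta_k^2$.

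Since a localized perturbation alters the flat norm by at most its mass, $T_k$ is an $(F,\omega)$-almost minimizer in the sense of Almgren with linear modulus $\omega(r)=C\Lambda r$. The regularity theory for almost minimizers of smooth elliptic parametric integrands (Almgren, Bombieri, Schoen--Simon) then yields $C^{1,\alpha}$ regularity of $\supp(T_k)$ in the interior, with analogous boundary regularity (possibly after reducing to the Euclidean setting via the Lipschitz tubular projection announced in the footnote); Schauder bootstrap upgrades this to $C^{2,\alpha}$. Combined with $\mathbb{F}(T_k-\Sigma)\to 0$ and Allard-type monotonicity, for $k$ large $T_k=\graph(u_k)$, where $u_k$ is a section of $N\Sigma$ over $\Sigma$ with $\|u_k\|_{C^{2,\alpha}}\to 0$ (and vanishing on $\partial\Sigma$, so that $\partial T_k=\partial\Sigma$, consistent with the homology constraint).

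The integral $(n+1)$-current interpolating between $\Sigma$ and $T_k=\graph(u_k)$ via the normal exponential map has mass bounded by $C\|u_k\|_{L^1}$, so
\[
\delta_k=\mathbb{F}(T_k-\Sigma)\le C\|u_k\|_{L^1}\le C'\|u_k\|_{H^1}.
\]
Since $\Sigma$ is critical, smoothness of $F$ gives the Taylor expansion
\[
F(\graph(u))=F(\Sigma)+\tfrac{1}{2}\,Q(u,u)+o(\|u\|_{H^1}^2)\qquad\text{as }\|u\|_{C^1}\to 0,
\]
where $Q$ is the second variation of $F$ at $\Sigma$, and strict stability provides the coercivity estimate $Q(u,u)\ge c_0\|u\|_{H^1}^2$ on the admissible space of normal sections. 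Applied to $u_k$, together with the upper bound $F(T_k)-F(\Sigma)<\tfrac{1}{k}\delta_k^2$, this yields
\[
\tfrac{c_0}{2}\|u_k\|_{H^1}^2\le\tfrac{(C')^2}{k}\|u_k\|_{H^1}^2+o(\|u_k\|_{H^1}^2),
\]
which for $k$ large contradicts $u_k\not\equiv 0$.

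The principal obstacle I foresee is the regularity step: one must verify that the nonlocal and nonsmooth penalty is compatible with the almost-minimality framework required by the regularity theory for elliptic parametric integrands, and one must handle $\partial\Sigma$ via boundary regularity or the tubular retraction. A secondary but delicate point is ensuring that $\mathbb{F}(T_k-\Sigma)$ is indeed of the same order as $\delta_k$; this comparability, essential for closing the chain of inequalities at the end, forces a careful choice of $\Lambda$ depending on the ellipticity constant of $F$.
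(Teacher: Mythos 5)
Your overall strategy --- penalize \`a la Cicalese--Leonardi, pass to minimizers of the penalized functional, show they are almost minimizers, invoke the regularity theory to obtain graphicality over $\Sigma$, and conclude via the second variation --- is the same as the paper's, and your final Taylor-expansion/coercivity computation is correct. The genuine gap is the step where you claim that ``invoking the ellipticity of $F$'' one shows $\mathbb{F}(T_k-\Sigma)$ is comparable to $\delta_k$. Everything downstream hinges on this: you need $\mathbb{F}(T_k-\Sigma)\to 0$ before you can get Hausdorff convergence of supports and hence graphs $u_k$ with $\|u_k\|_{C^1}\to 0$, and you need the lower bound $\mathbb{F}(T_k-\Sigma)\gtrsim\delta_k$ to substitute $\delta_k\le C'\|u_k\|_{H^1}$ at the end. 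Ellipticity alone does not give either. What is required is a linear lower bound $F(\Sigma)-F(T)\le C_0\,\mathbb{F}(T-\Sigma)$ for all $T$ homologous to $\Sigma$ with small flat distance: only then does the comparison $F(T_k)+\Lambda|\mathbb{F}(T_k-\Sigma)-\delta_k|\le F(S_k)<F(\Sigma)+\delta_k^2/k$ force $|\mathbb{F}(T_k-\Sigma)-\delta_k|\le\tfrac12\delta_k$ once $\Lambda\gg C_0$. Without it, nothing prevents the penalized minimizer from drifting to a competitor with $F(T_k)$ well below $F(\Sigma)$ (recall $\Sigma$ is only locally, not homologically, minimizing) while paying the penalty $\Lambda\delta_k\to 0$ essentially for free; your constraint $\mathbb{F}(T-\Sigma)\le\varepsilon_0$ only keeps $T_k$ at bounded, not vanishing, flat distance from $\Sigma$.

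That lower bound is precisely Lemma \ref{calibra}, and it is the one genuinely delicate ingredient. For $F=\mathbf{M}$ (or convex $F$) it follows from a calibration form, but for a general elliptic integrand the paper proves it by (i) realizing $\mathbb{F}(T-\Sigma)$ by a mass-minimizing filling $S$ (Lemma \ref{lemmaflat}), (ii) slicing $S$ by the distance to $\Sigma$ so as to replace $T$ by a homologous $\tilde T$ supported in a tubular neighbourhood, at a cost controlled by $\mathbf{M}(S)/\varepsilon_1$, and (iii) applying Theorem 2 of White, i.e.\ the minimality of $\Sigma$ in that tubular neighbourhood, which itself relies on strict stability. You must either supply this lemma or restructure as the paper does: first prove the non-quantitative statement $F(S)>F(\Sigma)$ with the linear penalty, then run the quantitative argument with the quadratic penalty $\lambda(\mathbb{F}(T-\Sigma)-\delta_k)^2$, for which the non-quantitative statement takes over the role of your comparability claim. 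A secondary, fixable imprecision: Hausdorff convergence of $\mathrm{spt}(T_k)$ does not follow from flat convergence alone; it requires the density lower bound for almost minimizers combined with a slicing argument on the small fillings (Lemma \ref{l:hausdorffconvergence}), which your appeal to ``Allard-type monotonicity'' presumably intends but does not spell out.
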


Following \cite{Wh}, in Theorem \ref{th2} we exploit the previous result to prove a minimax property of unstable, but nondegenerate minimal submanifolds. 
%

\subsection{Idea of the proof of Theorem \ref{th1_w}}
Here is a sketch of the proof of Theorem \ref{th1_w}. For simplicity, we replace \eqref{e:main_estimate} with the weaker (non quantitative) inequality $F(S)>F(\Sigma)$, which would imply that $\Sigma$ is uniquely minimizing in the flat neighbourhood. The proof is by contradiction, and it is inspired by the technique used in \cite{CL}. We assume that for every $\delta>0$ we can select $S_\delta$, homologous to $\Sigma$, which satisfies $F(S_\delta)\leq F(\Sigma)$ and $0<\mathbb{F}(S_\delta-\Sigma)<\delta$. We denote $\eta_\delta:=\mathbb{F}(S_\delta-\Sigma)$ and define, for $\lambda> 0$, a penalized functional $F_{\delta,\lambda}$ as
$$F_{\delta,\lambda}(T):=F(T)+\lambda|\mathbb{F}(T-\Sigma)-\eta_\delta|\,.$$
We then consider integral currents 
$$R_{\delta,\lambda}\in{\rm{argmin}}\{F_{\delta,\lambda}(T):T\;{\rm{is\;homologous\;to\;}}\Sigma\}.$$
By definition, we have 
$$F(R_{\delta,\lambda})\leq F_{\delta,\lambda}(R_{\delta,\lambda})\leq F_{\delta,\lambda}(S_\delta)=F(S_{\delta})\leq F(\Sigma)\,,$$
which, in addition, implies $R_{\delta,\lambda}\neq \Sigma$, since equality would lead to the contradiction  
\[F_{\delta,\lambda}(R_{\delta,\lambda})= F(\Sigma)+\lambda\eta_{\delta} \geq F(S_{\delta})+\lambda\eta_\delta > F_{\delta,\lambda}(S_\delta) \geq F_{\delta,\lambda}(R_{\delta,\lambda})\,.\] Moreover, one can easily prove (Lemma \ref{lemmaconv}) that every $R_\lambda$ such that $\F(R_{\delta_i,\lambda}-R_\lambda) \to 0$ for some $\delta_i\searrow 0$ is a minimizer of 
$$F_{0,\lambda}(T):=F(T)+\lambda|\mathbb{F}(T-\Sigma)|.$$
We can also prove (Lemma \ref{lambdazero}) that for $\lambda$ large enough, the only minimizer of $F_{0,\lambda}$ is $\Sigma$ itself, hence, by standard compactness and lower semicontinuity properties, we can find a sequence $\delta_i\searrow 0$ such that the currents $R_{\delta_i,\lambda}$ converge to $\Sigma$. By the strict stability of $\Sigma$, the inequality $F(R_{\delta_i,\lambda})\leq F(\Sigma)$ would immediately imply the contradiction that $R_{\delta_i,\lambda}=\Sigma$, for every $i$ sufficiently large, if we could guarantee that $R_{\delta_i,\lambda}$ are globally parametrized as graphs of regular maps on the normal bundle of $\Sigma$, converging to 0 strongly, up to the boundary. On the other hand, this is the case because 
every $R_{\delta_i,\lambda}$ is an almost minimizer for $F$ (Lemma \ref{lemmalambdamin}) and therefore its ``graphicality'' and the strong convergence are ensured by the regularity theory for almost minimizers. 

\subsection{Comparison with results in the literature}
Federer proved in \cite{Fed2} a minimizing property for any extremal submanifold, among homologous surfaces which differ from it by a closed current of small mass. Extensions of White's result have been considered more recently by other authors in several contests. In \cite{MoR}, the authors prove that any smooth, oriented hypersurface of a Riemannian manifold of dimension $m\leq 7$, which has constant mean curvature and positive second variation with respect to variations fixing the volume, is uniquely area minimizing among homological oriented hypersurfaces in a small $L^1$-neighbourhood. Moreover the volume constraint can be dropped for minimal hypersurfaces. In \cite{AFM}, the authors prove a statment of similar nature for nonlocal isoperimetric problems, providing also quantitative estimates. Lastly, for a comparison on the quantitative part of Theorem \ref{th1_w}, we refer the reader to the paper \cite{DpM} where the authors prove that, for uniquely regular area minimizing hypersurfaces, the validity of quadratic stability inequalities is equivalent to the uniform positivity of the second variation of the area.

%
%
\section*{Acknowledgements} We would like to thank Emanuele Spadaro for several inspiring discussions. D.I. is supported by SNF grant 159403 \textit{Regularity questions in geometric measure theory}. A.M. is supported by the ERC grant 306247 \textit{Regularity of area minimizing currents}. Part of this work was conceived while A.M. was hosted by the Max Planck Institute in Leipzig.  He would like to warmly thank the institute for the support received.

\section{Notations and Preliminars}


\subsection{Integral Currents}
A \emph{rectifiable $n$-current} $T$ on a Riemannian manifold $M$ is a continuous linear functional on the space of smooth differential $n$-forms on $M$ admitting the following representation:
\begin{equation}\label{eq:rect_curr}
 \langle T;\omega\rangle:=\int_{E}\langle\omega(x),\tau(x)\rangle\theta(x)\,d\cH^n(x),\quad\forall\,\omega\in \mathscr{C}^\infty_c(M,\Lambda^{n}(TM)),
\end{equation}
where: 
\begin{itemize}
\item $E$ is a countably $n$-rectifiable set (see \S 11 of \cite{Sim83}) contained in $M$,
\item $\cH^n$ is the $n$-dimensional Hausdorff measure,
\item $\tau(x)\in\Lambda_n(T_xM)$ is the \emph{orientation} of $T$, i.e. a simple $n$-vectorfield with $|\tau(x)|=1$ and spanning the approximate tangent space $T_x E$, for 
$\cH^n$-a.e. $x\in E$,
\item $\theta$ is a function in $L^1_{loc}(\cH^{n}\res E)$ which is called the \emph{multiplicity} of $T$.
\end{itemize}
We define the mass $\mathbf{M}_A(T)$ of the 
$n$-current $T$ in the  Borel set $A\subset M$ as the measure $\theta\cH^{n}\res E(A)$. We will drop the subscript $A$ when $A=\R^m$. The Radon measure $\theta\cH^{n}\res E$ will be often denoted simply by $\|T\|$, while the vectorfield $\tau$ will be also denoted $\vec T$. Clearly to a compact, orientable, smooth, $n$-dimensional submanifold $\Sigma$ it is 
canonically associated a rectifiable $n$-current, whose mass coincides with the $n$-dimensional volume of $\Sigma$. The \emph{boundary} of a rectifiable $n$-current $T$ is the $(n-1)$-current 
$\partial T$ defined by the relation 
$$\langle\partial T,\phi\rangle=\langle T;d\phi\rangle\quad\forall\,\phi\in \mathscr{C}^\infty_c(M,\Lambda^{n-1}(TM)).$$

An \emph{integral $n$-current} $T$ is a rectifiable $n$-current with finite mass, such that the boundary $\partial T$ is also a rectifiable $(n-1)$-current with finite mass and the multiplicity both in $T$ and 
in $\partial T$ takes only integer values.\\

The space of currents is naturally endowed with a notion of weak$^*$- convergence. In some cases it is convenient to consider also the following notion of metric. The \emph{flat norm} of an integral $n$-current $T$ in the compact set $K$ is the quantity
\begin{equation}\label{e:def_flat}
\mathbb{F}_K(T)=\inf \{\mathbf{M}_K(S)+\mathbf{M}_K(R):T=S+\partial R,\;\;\; S,R\; {\rm{are\;integral\; currents}}\}.
\end{equation}
The flat norm $\F(T)$ of the current $T$ is obtained by removing the subscript $K$ in the previous formula.
\subsection{Parametric integrands}\label{s:Parametric integrands}
\noindent A \emph{parametric integrand} of degree $n$ on $\R^m$ is a continuous map
$$F:\R^m\times \Lambda^n(\R^m)\to\R$$
which takes non-negative values, is positively homogeneous in the second variable and satisfies
\begin{equation}\label{lambda}
\Lambda^{-1}\|\tau\|\leq F(x,\tau)\leq\Lambda\|\tau\|,
\end{equation}
for some $\Lambda>0$. The parametric integrand induces a functional (also denoted by $F$) on integral $n$-currents on $\R^m$ defined by
$$F(T):=\int_{\R^m}F(x,\vec{T}(x))d\|T\|(x).$$
For fixed $x\in \R^m$ we define $F_x$ to be the integrand obtained by ``freezing'' $F$ at $x$, i.e.
$$F_x(y,\tau):=F(x,\tau).$$
Let $F$ be a parametric integrand of degree $n$ on $\R^m$. We call $F$ \emph{elliptic} if there is $C>0$ such that for every $x\in \R^m$ it holds
$$F_x(T)-F_x(S)\geq C(\mathbf{M}(T)-\mathbf{M}(S)),$$
whenever $S$ and $T$ are compactly supported integral $n$-currents on $\R^m$, with $\partial S=\partial T$ and $S$ is represented by a measurable subset of an $n$-dimensional, affine subspace.\\
Let $F$ be an elliptic parametric functional on $\R^{m}$, $T$ and $S$ be two $n-$dimensional integral currents, and $A\subset \R^{m}$ a $\|T\|$-measurable set. We will use the following facts:
\begin{enumerate}
\item $F(T) = F(T\res A)+F(T\res A^{c})$,
\item $F(T+S) \leq F(T)+F(S)$,
\item $F$ is lower semicontinuous with respect to the flat convergence of integral currents (cf Theorem 5.1.5 in \cite{Fed}). 
\end{enumerate} 

\subsection{Stability}
Let $M$ be a Riemannian manifold of dimension $m$. Let $\Sigma\subset M$ be an $n$-dimensional, compact, smooth submanifold with (possibly empty) boundary, which is a stationary point for a parametric integrand $F$. 

Let $J$ be the Jacobi (or second variation) operator of $\Sigma$ for $F$, acting on the space of smooth sections of the normal bundle of $\Sigma$ which vanish on $\partial\Sigma$. 

We say that $\lambda$ is an eigenvalue of $J$ if there exists a non-trivial normal vector field $X$ which vanishes on $\partial\Sigma$ such that
$$JX-\lambda X=0.$$
The \emph{index} of $\Sigma$ is the (possibly infinite) number of negative eigenvalues of $J$ (counted with multiplicity). The \emph{nullity} of $\Sigma$ is the number of linearly independent normal vector fields $X$ vanishing on $\partial\Sigma$ and satisfying $JX=0$. We say that $\Sigma$ is \emph{non-degenerate} if $J$ has nullity zero. We say that $\Sigma$ is \emph{strictly stable} if all the eigenvalues of $J$ are bounded from below by a strictly positive constant.

\section{Proof of Theorem \ref{th1_w}: Minimality of $\Sigma$}
As discussed in the introduction of \cite{Wh}, by the results of \cite[\S 8]{Wh2}, it is sufficient to prove the theorem when $M=\R^m$. Throughout the paper we denote by $T$ an $n$-dimensional integral current on $\R^m$. \\
The purpose of this section is to prove Proposition \ref{th1_nonquant}; the following weaker (non quantitative) version of Theorem \ref{th1_w}. We deduce from it that $\Sigma$ is the unique minimizer in a small (flat) neighbourhood. 
The quantitative version will be proved in Section \ref{s:quant}, exploiting this result. 
\begin{proposition}\label{th1_nonquant}
Let $M^{m}$ be a smooth Riemannian manifold and suppose that $\Sigma^n\subset M^m$ is a smooth, embedded, compact, oriented submanifold with (possibly empty) boundary, which is  strictly stable for a smooth, elliptic parametric functional $F$. Then there exist $\varepsilon>0$ (depending on $\Sigma$ and $M$) such that 
\begin{equation}\label{e:main_estimate_nonquant}
F(S)> F(\Sigma),
\end{equation}
whenever $0<\mathbb{F}(S-\Sigma)\leq\varepsilon$ and $S$ is an integral current on $M$, homologous to $\Sigma$.
\end{proposition}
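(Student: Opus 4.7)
The plan is to argue by contradiction, following the strategy outlined in the introduction and inspired by Cicalese–Leonardi. Assume the proposition fails; then for each $\delta>0$ there is an integral current $S_\delta$, homologous to $\Sigma$, with $\eta_\delta := \mathbb{F}(S_\delta-\Sigma) \in (0,\delta)$ and $F(S_\delta) \leq F(\Sigma)$. Fixing a parameter $\lambda>0$ to be chosen, I would introduce the penalized functional
$$F_{\delta,\lambda}(T) := F(T) + \lambda\bigl|\mathbb{F}(T-\Sigma) - \eta_\delta\bigr|$$
and pick a minimizer $R_{\delta,\lambda}$ of $F_{\delta,\lambda}$ in the class of integral currents homologous to $\Sigma$. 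Existence of such minimizers rests on the Federer–Fleming compactness theorem, which applies thanks to the mass bound $\mathbf{M}(T) \leq \Lambda F(T)$ coming from ellipticity, together with the lower semicontinuity of $F$ under flat convergence and the continuity of $\mathbb{F}(\,\cdot\,-\Sigma)$. Comparing $R_{\delta,\lambda}$ with $S_\delta$ yields $F(R_{\delta,\lambda}) \leq F(S_\delta) \leq F(\Sigma)$, and a short direct check rules out $R_{\delta,\lambda}=\Sigma$ because the penalty term would force a strict inequality.

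The next step is to let $\delta \to 0$. Invoking Lemma \ref{lemmaconv}, any flat-subsequential limit $R_\lambda$ of $R_{\delta_i,\lambda}$ is a minimizer of the limiting functional $F_{0,\lambda}(T) := F(T) + \lambda\mathbb{F}(T-\Sigma)$, since $\eta_\delta\to 0$ and $F$ is lower semicontinuous. Then I would apply Lemma \ref{lambdazero}: for $\lambda$ sufficiently large, the linear-in-flat-norm penalty overwhelms any attempt to decrease $F$ away from the stationary point, and $\Sigma$ is the unique minimizer of $F_{0,\lambda}$. Fixing such a $\lambda$, we obtain, along a subsequence, $\mathbb{F}(R_{\delta_i,\lambda}-\Sigma)\to 0$.

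The final and most delicate step converts this flat convergence into a strong statement compatible with strict stability. Here I would rely on Lemma \ref{lemmalambdamin}: the penalty $\lambda|\mathbb{F}(\,\cdot\,-\Sigma)-\eta_\delta|$ is Lipschitz in the flat distance and hence, on small mass deformations, comparable to the mass itself, which makes each $R_{\delta_i,\lambda}$ a uniform $(F,h)$-almost minimizer. The regularity theory for almost minimizers of smooth elliptic parametric integrands (Almgren–Schoen–Simon type) then yields $C^{1,\alpha}$ estimates, so that in a tubular neighbourhood of $\Sigma$ (interior and up to $\partial\Sigma$) the currents $R_{\delta_i,\lambda}$ are parametrized as graphs of normal sections $u_i \to 0$ strongly. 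A second-order expansion of $F$ about $\Sigma$ then gives
$$F(R_{\delta_i,\lambda}) - F(\Sigma) = \tfrac{1}{2}\langle J u_i,u_i\rangle + o(\|u_i\|^{2}),$$
and strict stability $\langle Ju,u\rangle \geq c\|u\|^{2}$ forces $F(R_{\delta_i,\lambda}) > F(\Sigma)$ for all large $i$, contradicting $F(R_{\delta_i,\lambda}) \leq F(\Sigma)$.

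I expect the main obstacle to lie in this last step, specifically in obtaining the graphicality and strong convergence \emph{up to} $\partial\Sigma$: interior regularity for almost minimizers is classical, but propagating the convergence and the $C^{1,\alpha}$ graphical structure to a neighbourhood of the fixed boundary $\partial\Sigma$ (so that the Jacobi quadratic form on sections vanishing on $\partial\Sigma$ can be used) is the technically heaviest point and is where the smoothness of $\Sigma$ and of $\partial\Sigma$ is crucially exploited. All other ingredients—existence of minimizers, passage to the limit $\delta\to 0$, and the large-$\lambda$ rigidity—reduce to standard compactness/semicontinuity manipulations once the almost-minimality of $R_{\delta,\lambda}$ is identified.
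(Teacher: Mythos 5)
Your proposal follows the paper's strategy almost exactly through the penalization scheme: the contradiction setup, the functional $F_{\delta,\lambda}$, the existence and non-triviality of the minimizers $R_{\delta,\lambda}$, the passage to the limit via Lemma \ref{lemmaconv}, and the large-$\lambda$ rigidity of Lemma \ref{lambdazero} (whose proof, note, is not a soft "penalty overwhelms" argument but rests on the reverse Lipschitz estimate $F(\Sigma)-F(T)\leq C\,\mathbb{F}(\Sigma-T)$ of Lemma \ref{calibra}, itself obtained by a slicing construction and an appeal to White's Theorem 2). Where you genuinely diverge is the endgame. You propose to use the regularity theory for almost minimizers to write $R_{\delta_i,\lambda}$ as $C^{1,\alpha}$ normal graphs $u_i\to 0$ up to $\partial\Sigma$ and then conclude by a second-order expansion of $F$ and strict stability --- in effect re-proving the final part of White's argument. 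The paper deliberately avoids this: from the almost minimality (Lemma \ref{lemmalambdamin}) it extracts only the much weaker conclusion that the supports of $R_{\delta_i,\lambda}$ converge to $\Sigma$ in Hausdorff distance (Lemma \ref{l:hausdorffconvergence}, proved by a density lower bound and a slicing/cut-and-paste comparison, with no regularity theory), and then invokes White's Theorem 2 as a black box: any homologous current supported in the tubular neighbourhood $U$ with $F\leq F(\Sigma)$ must equal $\Sigma$. Your route is viable --- the boundary regularity you flag as the technical crux is exactly what \cite{DS} supplies, and indeed the paper does deploy the full graphical representation with $C^{1,\beta}$ convergence in Section \ref{s:quant} for the quantitative estimate --- but for the non-quantitative Proposition \ref{th1_nonquant} it is heavier machinery than needed; the Hausdorff-convergence shortcut buys a cleaner reduction to the existing literature, at the price of quoting White's strong-topology uniqueness rather than reconstructing it.
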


\subsection{Proof of Proposition \ref{th1_nonquant}: Penalized functionals}\label{s3.1} We start by assuming (by contradiction) that for any $\delta>0$ there exists an integral current $S_\delta \neq \Sigma$ with the properties 
\begin{flalign*}
\quad\text{(i) } &S_\delta \text{ is homologous to } \Sigma\,; &\\
\quad \text{(ii) } &\F(S_\delta - \Sigma) < \delta\,;&\\
\quad \text{(iii) }  &F(S_\delta) \leq F(\Sigma) \,.&
\end{flalign*}
 If we could guarantee that $S_\delta$ were, in addition, regular normal graphs over $\Sigma$, then the stability of $\Sigma$ would lead to the contradiction $\Sigma = S_\delta$ for small enough $\delta$. However, this is not the case in general. For this reason we fix a parameter $\lambda>0$ and replace each $S_\delta$ by a minimizer $R_{\delta,\lambda}$ of the penalized functional  
 \begin{equation}\label{d:penalizedfunctional}
 F_{\delta,\lambda}(T) := F(T) +\lambda|\F(T-\Sigma) -\eta_\delta|\,,
\end{equation}
 where for brevity we denoted $\eta_{\delta}:= \F(S_\delta-\Sigma)$. More precisely, we choose 
 \begin{equation}
R_{\delta,\lambda} \in \text{argmin}\{F_{\delta,\lambda}(T):T\;{\rm{is\;homologous\;to\;}}\Sigma\}\,,
\end{equation}
 which exists (although  it may well be not unique), because of the usual compactness theorem for uniformly mass bounded (see \eqref{lambda}) integral currents and the lower semicontinuity of the functional $F$ with respect to flat convergence.


Now if $\lambda$ is large enough we expect (ii) to hold also for $R_{\delta,\lambda}$, whereas (i) and (iii) follow from the definition: 
\[ F(R_{\delta,\lambda})\leq F_{\delta,\lambda}(R_{\delta,\lambda})\leq F_{\delta,\lambda}(S_\delta)=F(S_\delta)\leq F(\Sigma)\,.\]
The upshot is that the minimizers $R_{\delta,\lambda}$ are also "almost minimizers" for the functional $F$ (see Lemma \ref{lemmalambdamin}) and the desired graphicality is then a consequence of the regularity theory for almost minimizers.\\
Hence the task is to check that (ii) holds as well. More precisely, our aim is to find $\lambda>0$ and a sequence of $\delta_i\searrow 0$ such that $\F(R_{\delta_i,\lambda}-\Sigma)\to 0$. The first step in this direction consists in proving that every subsequential limit of $R_{\delta_i,\lambda}$ is a minimizer for the functional
$$F_{0,\lambda}(T):=F(T)+\lambda\mathbb{F}(T-\Sigma).$$

\begin{lemma}\label{lemmaconv}
Fix $\lambda>0$. Let $\delta_i \searrow 0$ and let $\F(R_{\delta_i,\lambda}-R_\lambda)\to 0$. Then we have
$$R_\lambda\in{\rm{argmin}}\{F_{0,\lambda}(T):T\;{\rm{is\;homologous\;to\;}}\Sigma\}.$$
\end{lemma}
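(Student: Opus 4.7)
The plan is to pass to the limit in the minimality inequality enjoyed by the currents $R_{\delta_i,\lambda}$. First, fix an arbitrary integral current $T$ homologous to $\Sigma$. Since $R_{\delta_i,\lambda}$ minimizes $F_{\delta_i,\lambda}$ in the same homology class, we have
\[
F(R_{\delta_i,\lambda})+\lambda\,|\F(R_{\delta_i,\lambda}-\Sigma)-\eta_{\delta_i}|\;\leq\; F(T)+\lambda\,|\F(T-\Sigma)-\eta_{\delta_i}|.
\]
The right-hand side is straightforward: since $\eta_{\delta_i}\leq\delta_i\to 0$, it converges to $F(T)+\lambda\,\F(T-\Sigma)=F_{0,\lambda}(T)$.

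Next I would handle the left-hand side term by term. By the reverse triangle inequality for the flat norm,
\[
\bigl|\F(R_{\delta_i,\lambda}-\Sigma)-\F(R_\lambda-\Sigma)\bigr|\leq \F(R_{\delta_i,\lambda}-R_\lambda)\longrightarrow 0,
\]
so $\F(R_{\delta_i,\lambda}-\Sigma)\to\F(R_\lambda-\Sigma)$ and, combined with $\eta_{\delta_i}\to 0$, the penalty term converges to $\lambda\,\F(R_\lambda-\Sigma)$. For the $F$-term I use property (3) of Section \ref{s:Parametric integrands}, the lower semicontinuity of $F$ with respect to the flat topology, to conclude $\liminf_i F(R_{\delta_i,\lambda})\geq F(R_\lambda)$. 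Taking $\liminf$ in the inequality above then yields
\[
F_{0,\lambda}(R_\lambda)=F(R_\lambda)+\lambda\,\F(R_\lambda-\Sigma)\leq F_{0,\lambda}(T),
\]
which is the desired minimality.

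The one bookkeeping point to address is that $R_\lambda$ itself be homologous to $\Sigma$, so that it is an admissible competitor for $F_{0,\lambda}$. This follows from standard compactness: each $R_{\delta_i,\lambda}$ satisfies $R_{\delta_i,\lambda}-\Sigma=\partial Q_i$ for some integral $(n+1)$-current $Q_i$, and the uniform mass bound $\mass(R_{\delta_i,\lambda})\leq \Lambda F(R_{\delta_i,\lambda})\leq \Lambda F(\Sigma)$ coming from \eqref{lambda} and $F(R_{\delta_i,\lambda})\leq F(\Sigma)$, together with an isoperimetric-type filling argument, allow one to choose the $Q_i$ with uniformly bounded mass, and then to extract a flat-convergent subsequence $Q_i\to Q$ with $\partial Q=R_\lambda-\Sigma$.

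The main obstacle is essentially this last step, namely ensuring that homology class is preserved in the flat limit; everything else is a routine application of lower semicontinuity, the continuity of $T\mapsto \F(T-\Sigma)$ under flat convergence, and the definition of $R_{\delta_i,\lambda}$ as a minimizer.
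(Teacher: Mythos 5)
Your argument is correct and is essentially the paper's proof: both rest on the lower semicontinuity of $F$ under flat convergence, the continuity of $T\mapsto\F(T-\Sigma)$ along the flat-convergent sequence, the fact that $\eta_{\delta_i}\to 0$, and the minimality of $R_{\delta_i,\lambda}$ tested against a fixed competitor (the paper merely phrases this last comparison as a contradiction rather than directly). Your additional check that $R_\lambda$ is homologous to $\Sigma$ is a legitimate bookkeeping point that the paper leaves implicit; it is handled exactly as you describe, and in the ambient setting $M=\R^m$ to which the paper reduces it is immediate since $\partial$ is flat-continuous and every compactly supported integral cycle of positive dimension bounds.
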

\begin{proof}
 Since $\F(R_{\delta_i,\lambda} - R_\lambda)\to 0$ and $\eta_{\delta_i}\to 0$ we have, by lower semicontinuity
\begin{align*}
F_{0,\lambda}(R_\lambda)&=F(R_{\lambda})+\lambda\mathbb{F}(R_{\lambda}-\Sigma)\\
&\leq\liminf_{i\to\infty}\{F(R_{\delta_i,\lambda})+\lambda|\mathbb{F}(R_{\delta_i,\lambda}-\Sigma)-\eta_{\delta_i}|\}\\
&=\liminf_{i\to\infty} F_{\delta_i,\lambda}(R_{\delta_i,\lambda}).
\end{align*} 
 
Assume now by contradiction that there exist $\varepsilon>0$ and an $n$-dimensional integral current $S$, homologous to $\Sigma$, such that
$$F_{0,\lambda}(S)<F_{0,\lambda}(R_\lambda)-2\varepsilon.$$
Consider an integer $N$ such that $\lambda\eta_{\delta_N}\leq\varepsilon$. For every $M\geq N$ it holds
\begin{align*}
F_{\delta_M,\lambda}(S)&=F(S)+\lambda|\mathbb{F}(S-\Sigma)-\eta_{\delta_M}|\leq F(S)+\lambda\mathbb{F}(S-\Sigma)+\lambda\eta_{\delta_M}\\
&\leq F(S)+\lambda\mathbb{F}(S-\Sigma)+\varepsilon=F_{0,\lambda}(S)+\varepsilon<F_{0,\lambda}(R_\lambda)-\varepsilon\\
&\leq \liminf_{i\to\infty} F_{\delta_i,\lambda}(R_{\delta_i,\lambda})-\varepsilon,\leq \liminf_{i\to\infty} F_{\delta_i,\lambda}(S)-\varepsilon,
\end{align*}
which is a contradiction.
\end{proof}

The second step is to prove that, if $\lambda$ is sufficiently large, then the only minimizer of $F_{0,\lambda}$ is $\Sigma$. This is achieved in Lemma \ref{lambdazero}. To prove it we need the estimate of Lemma \ref{calibra}, which, on the other hand, is based on the following general fact. Roughly it states that, in small regimes, the flat norm of a closed current is realized by a minimal filling. For the sake of generality, only in the next lemma we consider the flat norm on the manifold $M$, i.e. we require that the currents $R$ and $S$ in \eqref{e:def_flat} are supported on $M$.

\begin{lemma}\label{lemmaflat}
Let $M^m$ be a smooth, compact manifold in $\R^d$ (or $M=\R^m$). Then there exists $\varepsilon_0=\varepsilon_0(M)>0$ such that for every $n$-dimensional integral current $T$ on $M$ with 
$\partial T=0$ and $\mathbb{F}(T)\leq\varepsilon_0$ there is an integral $(n+1)$-current $R$ on $M$ satisfying $\partial R=T$ and
$$\mathbf{M}(R)=\mathbb{F}(T).$$
\end{lemma}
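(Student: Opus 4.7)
The plan is to show that, whenever $\F(T)$ is sufficiently small, the infimum defining the flat norm is attained on decompositions of the form $T = 0 + \partial R$, i.e.\
\[\F(T) = \inf\{\M(R) : R \text{ is an integral }(n+1)\text{-current on } M,\ \partial R = T\},\]
and then to produce the optimal $R$ by compactness. The main auxiliary tool will be an isoperimetric inequality for closed integral currents on $M$: there exist constants $C_0 = C_0(M) > 0$ and $\eta = \eta(M) > 0$ such that every closed integral $n$-current $S$ on $M$ with $\M(S) \leq \eta$ admits an integral $(n+1)$-filling $R'$ on $M$ with $\M(R') \leq C_0\, \M(S)^{(n+1)/n}$. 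For $M = \R^m$ this is the classical Federer--Fleming inequality. For compact $M \subset \R^d$, I would derive it by applying the Federer--Fleming deformation theorem at scale $\sim \M(S)^{1/n}$ to obtain a Euclidean filling supported in an $O(\M(S)^{1/n})$-neighbourhood of $\supp S \subset M$; provided $\M(S)$ is small enough, this neighbourhood lies inside a tubular neighbourhood $U$ of $M$, and the Lipschitz retraction $\pi \colon U \to M$ pushes the filling onto $M$ while inflating mass by at most $\Lip(\pi)^{n+1}$.

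Granted this tool, choose $\eps_0 = \eps_0(M) > 0$ small enough that $2\eps_0 \leq \eta$ and $C_0 (2\eps_0)^{1/n} \leq 1$, and let $T$ satisfy the hypotheses of the lemma. For each $k \in \N$, pick a decomposition $T = S_k + \partial R_k$ on $M$ with $\M(S_k) + \M(R_k) \leq \F(T) + 1/k$. Since $\partial T = 0$, one has $\partial S_k = 0$, and $\M(S_k) \leq 2\eps_0 \leq \eta$, so the isoperimetric estimate yields an integral $(n+1)$-current $R_k'$ on $M$ with $\partial R_k' = S_k$ and
\[\M(R_k') \leq C_0 \M(S_k)^{(n+1)/n} \leq C_0 (2\eps_0)^{1/n} \M(S_k) \leq \M(S_k).\]
The current $Q_k := R_k + R_k'$ is then an integral $(n+1)$-filling of $T$ on $M$ satisfying $\M(Q_k) \leq \M(R_k) + \M(S_k) \leq \F(T) + 1/k$.

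The sequence $\{Q_k\}$ has uniformly bounded mass, boundary mass equal to $\M(T)$, and is supported in the compact set $M$. By the Federer--Fleming compactness theorem, a subsequence converges in the flat norm to an integral $(n+1)$-current $R$ on $M$. Continuity of $\partial$ under flat convergence gives $\partial R = T$, and lower semicontinuity of mass yields $\M(R) \leq \liminf_k \M(Q_k) \leq \F(T)$. The reverse inequality $\F(T) \leq \M(R)$ is immediate from the definition of $\F$ via the trivial decomposition $T = 0 + \partial R$. Hence $\M(R) = \F(T)$, as required. I expect the first paragraph—confining the isoperimetric filling to $M$ via the deformation-plus-projection construction—to be the main technical obstacle; the near-optimal-decomposition comparison and the final compactness step are of a routine nature.
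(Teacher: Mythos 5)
Your proof is correct, and its overall architecture --- take near-optimal decompositions $T=S_k+\partial R_k$, replace the closed small-mass piece $S_k$ by a filling supported on $M$, and extract a subsequential limit by compactness and lower semicontinuity --- is the same as the paper's. The genuine difference is in the step you yourself flag as the obstacle: confining the filling to (a neighbourhood of) $M$. The paper takes a \emph{mass-minimizing} filling $S$ of that piece in the ambient $\R^d$ and combines the monotonicity formula $\M(S\res B_r(x))\geq\omega_{n+1}r^{n+1}$ (valid for $r<\dist(x,M)$, since $\partial S$ is supported on $M$) with the isoperimetric bound $\M(S)\leq C_1\M(P)^{(n+1)/n}$ to force $\supp S$ into the tubular neighbourhood, and then projects. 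You instead read off the support control directly from the deformation theorem run at scale $\rho\sim\M(S_k)^{1/n}$ --- which is how the isoperimetric inequality is proved in the first place --- packaging it as a localized isoperimetric inequality on $M$. This avoids introducing an auxiliary mass minimizer and the monotonicity formula for it, so it is arguably more self-contained, while the paper's version is quicker to write given that both ingredients are quoted from \cite{Sim83}. Two minor points to tighten: your inequality $\M(S_k)\leq 2\eps_0$ only holds once $1/k\leq\eps_0$, which is harmless since only large $k$ matters; and for $M=\R^m$ the compactness step requires the $Q_k$ to have uniformly compact support, which one arranges by retracting onto a large ball containing $\supp T$ via the $1$-Lipschitz nearest-point projection --- a point the paper's proof also leaves implicit.
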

\begin{proof}
We will assume that $M$ is a smooth, compact manifold in $\R^d$; the proof for $M=\R^m$ is identical. Fix $\delta>0$ and let $P$ and $Q$ be integral currents in $M$ satisfying $T=P+\partial Q$ and $\mathbf{M}(P)+\mathbf{M}(Q)\leq \mathbb{F}(T)+\delta$. Let $S$ be an integral $(n+1)$-current in $\R^d$ minimizing the mass among 
all integral currents with boundary equal to $P$ (notice that $\partial P=0$). For $\cH^{n+1}$-a.e. $x\in$ spt$(S)$, the monotonicity formula (see formula (17.3) of \cite{Sim83}) yields 
$$\mathbf{M}(S\res B_r(x))\geq\omega_{n+1}r^{n+1},$$ 
whenever $r<$ dist$(x,M)$. Moreover, the isoperimetric inequality (see Theorem 30.1 of \cite{Sim83}) gives 
$$\mathbf{M}(S)\leq C_1\mathbf{M}(P)^{\frac{n+1}{n}}.$$
This implies that, if $\mathbb{F}(T)$ is sufficiently small, the support of $S$ is contained in a small tubular neighbourhood of $M$. In particular, by the assumptions on $M$, the closest point projection $\pi$ on $M$ is (uniquely defined and) 
Lipschitz on this neighbourhood. By Lemma 26.25 of \cite{Sim83}, it follows that $\mathbf{M}(\pi_{\sharp}S)\leq C_2\mathbf{M}(S)$, therefore if $\mathbb{F}(T)$ is sufficiently small, then $\mathbf{M}(\pi_{\sharp}S)\leq\mathbf{M}(P)$.
Hence, setting $R_\delta=\pi_{\sharp}S+Q$, we have $T=\partial R_\delta$ and 
$$\mathbf{M}(R_\delta)\leq\mathbf{M}(\pi_{\sharp}S)+\mathbf{M}(Q)\leq\mathbb{F}(T)+\delta.$$
By the usual compactness and lower semicontinuity, we can take $R$ as a subsequential limit of any sequence $R_{\delta_i}$, for $\delta_i\to 0$. Clearly $R$ is supported on $M$.
\end{proof}

\begin{lemma}\label{calibra}
There exist $\varepsilon>0$ and a constant $C=C(\Sigma)$ such that, for every $n$-dimensional integral current $T$ on ${\R^m}$, homologous to $\Sigma$, such that $\mathbb{F}(\Sigma-T)\leq\varepsilon$, it holds
 $$F(\Sigma)-F(T)\leq C\mathbb{F}(\Sigma-T).$$
\end{lemma}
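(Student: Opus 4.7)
My strategy is to construct a smooth global $n$-form $\omega$ on $\R^m$ which sub-calibrates $F$ and coincides with $F(\cdot, \vec\Sigma)$ on $\Sigma$, and then combine it via Stokes' theorem with an optimal $(n+1)$-dimensional filling of the cycle $T - \Sigma$ supplied by Lemma \ref{lemmaflat}. Concretely, choosing $\varepsilon \leq \varepsilon_0$ with $\varepsilon_0$ as in Lemma \ref{lemmaflat} for $M = \R^m$, the hypothesis that $T$ is homologous to $\Sigma$ (in particular $\partial(T-\Sigma)=0$) yields an integral $(n+1)$-current $R$ with $\partial R = T - \Sigma$ and $\mathbf{M}(R) = \F(T - \Sigma)$.

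The heart of the argument is the construction of $\omega$. Let $U$ be a tubular neighborhood of $\Sigma$ of radius $r_0 > 0$ small enough that the nearest-point projection $\pi \colon U \to \Sigma$ is well-defined and smooth, and set
\[\omega^{*}(x) := D_\tau F\bigl(x, \vec{\Sigma}(\pi(x))\bigr),\]
where $D_\tau F(x, \cdot)$ denotes the differential of the $1$-homogeneous function $\tau \mapsto F(x, \tau)$. Ellipticity of $F$ (see \S\ref{s:Parametric integrands}) renders $F(x, \cdot)$ convex on simple $n$-vectors, so Euler's identity gives $\omega^{*}(x)(\vec\Sigma(\pi(x))) = F(x, \vec\Sigma(\pi(x)))$, and the subgradient inequality yields $\omega^{*}(x)(\xi) \leq F(x, \xi)$ for every simple $n$-vector $\xi \in \Lambda_n T_x\R^m$. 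I then pick a cutoff $\phi \in C_c^{\infty}(U)$ with $\phi \equiv 1$ on a smaller neighborhood of $\Sigma$ and extend $\omega := \phi\, \omega^{*}$ by zero outside $U$ to a smooth $n$-form on all of $\R^m$. Since $\phi \in [0, 1]$ and $F \geq 0$, the sub-calibration inequality survives the cutoff and becomes global, whereas $\phi|_\Sigma = 1$ preserves equality along $\Sigma$. This construction, and in particular the passage from a local to a global sub-calibration via the cutoff, is the main technical step of the plan.

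With $\omega$ in hand the proof closes in one line: from $F(\Sigma) = \int_\Sigma \omega$ and $F(T) \geq \int_T \omega$, Stokes' theorem gives
\[F(\Sigma) - F(T) \leq \int_{\Sigma - T} \omega = -\int_{\partial R} \omega = -\int_R d\omega \leq \|d\omega\|_{L^\infty(\R^m)}\, \mathbf{M}(R) = C\, \F(\Sigma - T),\]
with $C := \|d\omega\|_{L^\infty(\R^m)}$ depending only on $\Sigma$ and $F$, as required.
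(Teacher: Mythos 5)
Your closing steps (the cutoff, Stokes' theorem, and the optimal filling from Lemma \ref{lemmaflat}) are fine, but the heart of the plan --- the existence of the supporting covector $\omega^*$ --- is exactly where the argument breaks down for a general elliptic integrand. The ellipticity used in this paper (\S\ref{s:Parametric integrands}) is Almgren's integral-comparison condition: $F_x(T)-F_x(S)\geq C(\mathbf{M}(T)-\mathbf{M}(S))$ whenever $S$ is a piece of a flat disc and $T$ is an \emph{integral} current with the same boundary. For $1<n<m-1$ this does not imply that $\tau\mapsto F(x,\tau)$ is convex, nor what you actually need: an $n$-covector $\omega^*(x)$ with $\omega^*(x)(\vec\Sigma(\pi(x)))=F(x,\vec\Sigma(\pi(x)))$ and $\omega^*(x)(\xi)\leq F(x,\xi)$ for \emph{all} simple $\xi$. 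Such a supporting covector would follow by a Hahn--Banach argument if flat discs minimized $F_x$ among all real (normal) currents, but minimality among integral currents is strictly weaker and yields no subgradient inequality; note also that the cone of simple $n$-vectors is not convex, so ``convex on simple $n$-vectors'' does not by itself produce a supporting \emph{linear} functional. Euler's identity $D_\tau F(x,\tau_0)(\tau_0)=F(x,\tau_0)$ does hold by $1$-homogeneity, but the inequality $D_\tau F(x,\tau_0)(\xi)\leq F(x,\xi)$ for simple $\xi\neq\tau_0$ is precisely the convexity you have not established. The paper is explicit about this obstruction: it runs your argument verbatim for $F=\mathbf{M}$, remarks that it adapts to convex integrands, and then states that in the general case a more involved proof is needed.

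For a general elliptic $F$ the paper uses a completely different mechanism, which moreover exploits the strict stability of $\Sigma$ --- an ingredient entirely absent from your proposal. One takes the optimal filling $S$ of $T-\Sigma$, slices it with the distance function $d(\cdot)=\mathrm{dist}(\cdot,\Sigma)$ at a good level $t\in(\varepsilon_1/2,\varepsilon_1)$, and forms $\tilde T:=T\res B_t(\Sigma)+\langle S,d,t\rangle$, which is homologous to $\Sigma$ and supported in the tubular neighbourhood $U$ where Theorem 2 of \cite{Wh} guarantees $F(\Sigma)\leq F(\tilde T)$. Additivity and subadditivity of $F$ then reduce everything to the slice, giving $F(\Sigma)-F(T)\leq F(\langle S,d,t\rangle)\leq\Lambda\mathbf{M}(\langle S,d,t\rangle)\leq(2\Lambda/\varepsilon_1)\,\mathbf{M}(S)=(2\Lambda/\varepsilon_1)\,\F(T-\Sigma)$. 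If you want to salvage your route, you must either add convexity of $F(x,\cdot)$ as a hypothesis (in which case your proof is correct and coincides with the paper's special-case argument) or replace the sub-calibration step by an argument of this slicing-plus-stability type.
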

\begin{proof}
We present firstly a very simple proof of this fact, which is valid in the case $F(x,\tau)\equiv \|\tau\|$, i.e. $F(T)=\mathbf{M}(T)$. 

Let $\omega$ be a compactly supported $n$-form satisfying $\|\omega\|_\infty\leq 1$ and $\langle\omega;\tau_\Sigma\rangle=1$ 
whenever $\tau_\Sigma$ is a tangent unit vector orienting $\Sigma$ (which exists by smoothness of $\Sigma$). If $\varepsilon\leq\varepsilon_0$ in Lemma \ref{lemmaflat}, we can find an integral $(n+1)$-current $S$ satisfying
\begin{equation}\label{e_flat}
\mbox{$\partial S=\Sigma-T$\quad and \quad $\mathbf{M}(S)=\mathbb{F}(\Sigma-T)$.}
\end{equation}
Then we have
\begin{align*}
&F(\Sigma)-F(T)=\int_M 1 d\|\Sigma\|-\int_{\R^d} 1 d\|T\|\\
&\leq\int_{\R^d} \langle\omega (x);\tau_\Sigma(x)\rangle d\|\Sigma\|(x)-\int_{\R^d} \langle\omega (x);\vec{T}(x)\rangle d\|T\|(x)\\
&=\langle S;d\omega\rangle\leq\|d\omega\|_\infty\mathbf{M}(S)=\|d\omega\|_\infty\mathbb{F}(\Sigma-T).
\end{align*}

This completes the proof in the case $F(x,\tau)\equiv |\tau|$. In case $F$ is a convex functional, it is easy to adapt the previous argument.\\

In the general case, the only proof we are able to devise is more involved. In particular we need to exploit the stability of $\Sigma$ and we make use of Theorem 2 of \cite{Wh}. 

Let $d(x):=\text{dist}(x,\Sigma)$ and again choose $\varepsilon\leq \varepsilon_0$ in Lemma \ref{lemmaflat}. Let $S$ be an $(n+1)$-dimensional integral current such that $\partial S=T-\Sigma$ and $\mathbf{M}(S)=\mathbb{F}(T-\Sigma)$. Let $\varepsilon_1<\varepsilon_0$ be such that the open tubular neighbourhood of radius $\varepsilon_1$ centred at $\Sigma$, i.e. the set
$$B_{\varepsilon_1}(\Sigma):=\{x\in\R^m: \text{dist}(x,\Sigma)<\varepsilon_1\}\,,$$
is contained in the open tubular neighbourhood $U$ given by Theorem 2 of \cite{Wh}.\\
Denoting by $\langle S,d,t\rangle$ the ``slices'' of $S$ according to the function $d$, we have by standard properties of the slicing (see Lemma 28.5 (1) and (2) of \cite{Sim83}) that there exists $t\in(\varepsilon_1/2,\varepsilon_1)$ such that 
\begin{equation}\label{e:slicing}
\langle S, d, t\rangle=\partial(S\res B_t(\Sigma))-(\partial S)\res B_t(\Sigma) \quad\mbox{ and }\quad\mathbf{M}(\langle S, d, t\rangle)\leq \frac{2\mathbf{M}(S)}{\varepsilon_1}.
\end{equation}
Observe that $\tilde T:=T\res B_t(\Sigma) +\langle S, d, t\rangle$ is supported in $B_{\varepsilon_1}(\Sigma)$ and it is homologous to $\Sigma$, indeed 

\begin{align*}
\tilde T-\Sigma &=T\res B_t(\Sigma) +\partial(S\res B_t(\Sigma))-(\partial S)\res B_t(\Sigma)-\Sigma\\
&=T\res B_t(\Sigma) +\partial(S\res B_t(\Sigma))-(T-\Sigma)\res B_t(\Sigma)-\Sigma\res B_t(\Sigma)\\
&=\partial(S\res B_t(\Sigma)).
\end{align*}

Eventually we compute:
\begin{align*}
F(\Sigma)-F(T)&=F(\Sigma)-(F(T\res B_t(\Sigma))+F(\langle S, d, t\rangle))+F(\langle S, d, t\rangle)-F(T\res B_t(\Sigma)^c)\\
&\leq F(\Sigma)-F(\tilde{T})+F(\langle S, d, t\rangle),
\end{align*}
where we used the fact that parametric integrands are additive on currents supported on disjoint sets and subadditive for general rectifiable currents, and $F\geq0$.
Now, by Theorem 2 of \cite{Wh}, $F(\Sigma)-F(\tilde{T})<0$.
So we can conclude that, whenever $T$ is homologous to $\Sigma$ and $F(T-\Sigma)\leq\varepsilon_1$, it holds:
$$F(\Sigma)-F(T)\leq F(\langle S, d, t\rangle)\stackrel{\eqref{lambda}}{\leq}\Lambda\mathbf{M}(\langle S, d, t\rangle)\stackrel{\eqref{e:slicing}}{\leq}\frac{2\Lambda\mathbf{M}(S)}{\varepsilon_1}\stackrel{\eqref{e_flat}}{=}\frac{2\Lambda}{\varepsilon_1}\mathbb{F}(T-\Sigma).\qedhere$$
\end{proof}


\begin{lemma}\label{lambdazero}
 There exists $\lambda_0>0$ such that, for every $\lambda>\lambda_0$, there holds
$${\rm{argmin}}\{F_{0,\lambda}(T):T\;{\rm{is\;homologous\;to\;}}\Sigma\}=\{\Sigma\}.$$
\end{lemma}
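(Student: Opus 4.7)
The plan is to combine Lemma \ref{calibra} with the non-negativity of $F$ to force $\Sigma$ to strictly beat every competitor once $\lambda$ is taken large. The key observation is that Lemma \ref{calibra} provides a one-sided linear bound $F(\Sigma)-F(T)\leq C\mathbb{F}(\Sigma-T)$ in the flat ball of radius $\varepsilon$ around $\Sigma$; the penalty term $\lambda\mathbb{F}(T-\Sigma)$ dominates this linear loss as soon as $\lambda>C$, giving a strict inequality $F_{0,\lambda}(T)>F(\Sigma)$ in this regime. Outside that flat ball, the penalty alone becomes uniformly large, and since $F\geq 0$ we can force $F_{0,\lambda}(T)>F(\Sigma)$ by picking $\lambda$ even larger.

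More concretely, let $\varepsilon>0$ and $C=C(\Sigma)>0$ be the constants given by Lemma \ref{calibra}, and set
\[
\lambda_0 := \max\left\{C\,,\ \frac{F(\Sigma)+1}{\varepsilon}\right\}.
\]
Fix $\lambda>\lambda_0$ and let $T$ be any integral current homologous to $\Sigma$ with $T\neq \Sigma$. Note that $F_{0,\lambda}(\Sigma)=F(\Sigma)$ (the penalty vanishes). I would split the argument into two cases according to the size of $\mathbb{F}(T-\Sigma)$.

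In the first case, $0<\mathbb{F}(T-\Sigma)\leq\varepsilon$. By Lemma \ref{calibra} we have $F(T)\geq F(\Sigma)-C\,\mathbb{F}(T-\Sigma)$, so
\[
F_{0,\lambda}(T)=F(T)+\lambda\mathbb{F}(T-\Sigma)\geq F(\Sigma)+(\lambda-C)\mathbb{F}(T-\Sigma)>F(\Sigma),
\]
since $\lambda>C$ and $\mathbb{F}(T-\Sigma)>0$. In the second case $\mathbb{F}(T-\Sigma)>\varepsilon$, and using only $F\geq 0$,
\[
F_{0,\lambda}(T)\geq \lambda\mathbb{F}(T-\Sigma)>\lambda_0\,\varepsilon\geq F(\Sigma)+1>F(\Sigma).
\]
In either case $F_{0,\lambda}(T)>F_{0,\lambda}(\Sigma)$, proving that $\Sigma$ is the unique element of $\mathrm{argmin}\{F_{0,\lambda}(T):T\ \text{homologous to}\ \Sigma\}$.

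I do not see a genuine obstacle here; the only subtle point is ensuring that Lemma \ref{calibra} is applicable, i.e.\ that the $\varepsilon$ in its statement is the one encoded in the choice of $\lambda_0$. This forces us to use the \emph{same} $\varepsilon$ to delimit the two regimes, which is precisely why the threshold $\lambda_0$ involves both $C$ and $F(\Sigma)/\varepsilon$. The non-quantitative, one-sided estimate of Lemma \ref{calibra} is strong enough because the penalty is linear in $\mathbb{F}(T-\Sigma)$, so there is no need for any finer information (such as the strict stability of $\Sigma$, which has already been absorbed into Lemma \ref{calibra} via the use of \cite[Theorem 2]{Wh}).
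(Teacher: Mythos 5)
Your proof is correct and rests on the same key ingredient as the paper's, namely Lemma \ref{calibra} combined with $F\geq 0$; the only difference is presentational, since the paper argues by contradiction along a sequence $\lambda_i\to\infty$ (deducing $\mathbb{F}(S_i-\Sigma)\leq F(\Sigma)/\lambda_i\leq\varepsilon$ before invoking Lemma \ref{calibra}), whereas you give a direct two-case argument with an explicit threshold $\lambda_0=\max\{C,(F(\Sigma)+1)/\varepsilon\}$. Both versions are complete; yours has the minor merit of making the admissibility of Lemma \ref{calibra} (i.e.\ $\mathbb{F}(T-\Sigma)\leq\varepsilon$) explicit.
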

\begin{proof}
 Assume by contradiction there exist $\lambda_i\to\infty$ and $S_i\neq\Sigma$ such that $S_i\;{\rm{is\;homologous\;to\;}}\Sigma$ and 
$$S_i\in{\rm{argmin}}\{F_{0,\lambda_i}(T):T\;{\rm{is\;homologous\;to\;}}\Sigma\}.$$
Notice that $F_{0,\lambda_i}(\Sigma)=F(\Sigma)$ for every $i$, therefore we have:
$$F(S_i)+\lambda_i\mathbb{F}(S_i-\Sigma)=F_{0,\lambda_i}(S_i)\leq F_{0,\lambda_i}(\Sigma)=F(\Sigma).$$
Hence
$$\lambda_i\mathbb{F}(S_i-\Sigma)\leq F(\Sigma)-F(S_i),$$
which, for $i$ sufficiently large, contradicts Lemma \ref{calibra}.
\end{proof}

\subsection{Proof of Proposition \ref{th1_nonquant}: Almost minimizers}

Now, fixing $\lambda>\lambda_0$, we have a sequence of integral currents $R_{\delta_i,\lambda}$ homologous to $\Sigma$ such that $\F(R_{\delta_i,\lambda}-\Sigma)\to 0$ and $F(R_{\delta_i,\lambda})\leq F(\Sigma)$, i.e. we managed to replace the original sequence $S_{\delta_i}$ (a sequence of minimizers of a functional under an additional constraint) by  a sequence of global minimizers of a penalized functional. As already mentioned, being minimizers of the penalized functionals guarantees almost minimality properties, and by the regularity theory for such almost minimizers, one can deduce that the $R_{\delta_i,\lambda}$ are regular, normal graphs over $\Sigma$, for sufficiently large $i$. From this information, one would be able to conlcude the proof as in \cite{Wh}. Actually, we do not need to repeat the final part of that proof, but we can recast the problem in White's setting and exploit his result, once we prove that the almost minimality of the $R_{\delta_i,\lambda}$'s implies that, for $i$ sufficiently large, they are supported in a small tubular neighbourhood of $\Sigma$ (cf. Lemma \ref{l:hausdorffconvergence}).\\

\noindent We will adopt a special case of the notion of almost minimality introduced in \cite{DS}. Given an elliptic parametric functional $F$ and $C>0$, we say that an $n$-dimensional integral current $S$ is $C$-\emph{almost minimizing} for $F$, if
\begin{equation}\label{e:almostminimizing}
F(S)\leq F(S+X)+Cr\M(S\res K+X) \,,
\end{equation}
whenever $X$ is a closed $n$-current with support in a compact set $K$, which is contained in a ball $B_r= B_r(x_0) \subset \R^{m}$ of radius $r$ and center $x_0\in\R^{m}$. For the regularity theory it is sufficient to have \eqref{e:almostminimizing} only for small radii $r$ and for all $X$ satisfying in addition $\F(X)<1$. \\
As we show in the next lemma, the currents $R_{\delta,\lambda}$ are $C$-almost minimizing. This follows almost directly from the following stronger property of $R_{\delta,\lambda}$. Fix any closed $n$-dimensional current $X$. Then $R_{\delta,\lambda}+X$ is a competitor in the optimization of $F_{\delta,\lambda}$ and consequently  
 \begin{equation*}
F(R_{\delta,\lambda})+\lambda|\F(R_{\delta,\lambda}-\Sigma) -\eta_\delta| \leq F(R_{\delta,\lambda}+X) +\lambda|\F(R_{\delta,\lambda}+X-\Sigma)-\eta_\delta| \,.
\end{equation*}
This implies that 
\begin{equation}\label{e:almostmin4}
F(R_{\delta,\lambda})\leq F(R_{\delta,\lambda}+X)+\lambda\F(X)\,.
\end{equation}

\begin{lemma}\label{lemmalambdamin}
For every $\delta,\lambda>0,$ $R_{\delta,\lambda}$ are $C$-almost minimizing for $C = \frac{4\Lambda^{2}\lambda}{n+1}$.
\end{lemma}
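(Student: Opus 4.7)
My plan is to derive the almost minimality \eqref{e:almostminimizing} from the weaker property \eqref{e:almostmin4}, converting the flat-norm penalty $\lambda\F(X)$ into a local mass bound of the form $Cr\,\M(S\res K+X)$ via a cone-filling argument. Write $S:=R_{\delta,\lambda}$ and $W:=S\res K+X$. Since $X$ is a closed $n$-current supported in $K\subset B_r(x_0)$ one has $\partial W=\partial(S\res K)$, hence
$$Z:=x_0\cone W-x_0\cone(S\res K)$$
is an $(n+1)$-current with $\partial Z=W-S\res K=X$, supported in $B_r(x_0)$. The standard cone estimate $\M(p\cone T)\le\tfrac{r}{n+1}\M(T)$ for an $n$-current $T$ with support in $B_r(p)$ then yields
$$\F(X)\le \M(Z)\le \tfrac{r}{n+1}\bigl(\M(S\res K)+\M(W)\bigr).$$

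Inserting this into \eqref{e:almostmin4} and using additivity of $F$ on disjoint supports (property (1) of \S\ref{s:Parametric integrands}), which gives $F(S)-F(S+X)=F(S\res K)-F(W)$, I obtain
$$F(S\res K)-F(W)\le \tfrac{\lambda r}{n+1}\bigl(\M(S\res K)+\M(W)\bigr).$$
The remaining step is to absorb $\M(S\res K)$ on the right into a multiple of $\M(W)$ using the ellipticity bounds $\Lambda^{-1}\M(S\res K)\le F(S\res K)$ and $F(W)\le\Lambda\M(W)$ from \eqref{lambda}. Substituting them into the previous inequality and rearranging,
$$\bigl(\Lambda^{-1}-\tfrac{\lambda r}{n+1}\bigr)\M(S\res K)\le\bigl(\Lambda+\tfrac{\lambda r}{n+1}\bigr)\M(W),$$
so that for $r$ sufficiently small (admissible since the lemma is invoked only in the small-$r$ regime, cf.\ the discussion after \eqref{e:almostminimizing}) one obtains $\M(S\res K)\le (4\Lambda^2-1)\,\M(W)$, and therefore $\M(S\res K)+\M(W)\le 4\Lambda^2\M(W)$. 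Plugging this back gives
$$F(S)-F(S+X)\le \tfrac{4\Lambda^2\lambda}{n+1}\,r\,\M(W),$$
which is exactly \eqref{e:almostminimizing} with $C=\tfrac{4\Lambda^2\lambda}{n+1}$.

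The only conceptual obstacle is that \eqref{e:almostmin4} controls $F(S)-F(S+X)$ via $\F(X)$, whereas the target notion of almost minimality penalises only the \emph{local} mass $\M(S\res K+X)$; any cone filling of $X$ inevitably produces a spurious term $\M(S\res K)$, and it is precisely the ellipticity of $F$ that allows one to trade this term for a multiple of $\M(W)$, thereby giving rise to the factor $\Lambda^{2}$ in $C$.
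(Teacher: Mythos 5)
Your argument is correct and follows the paper's own proof in all essentials: both start from \eqref{e:almostmin4}, bound $\F(X)$ by the mass of a cone over $X$ (note your $Z=x_0\cone W-x_0\cone(S\res K)=x_0\cone X$ is exactly the paper's filling), and use the bounds \eqref{lambda} to absorb the spurious mass terms into $\M(R_{\delta,\lambda}\res K+X)$ for $r<r_0=\frac{n+1}{4\lambda\Lambda}$, arriving at the same constant $C=\frac{4\Lambda^{2}\lambda}{n+1}$. The only difference is organizational: the paper proves $\M(X)\le 4\Lambda^{2}\M(R_{\delta,\lambda}\res K+X)$ by contradiction, whereas you obtain the equivalent estimate $\M(S\res K)\le(4\Lambda^{2}-1)\M(W)$ by a direct rearrangement of the same three inequalities, which is slightly cleaner.
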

\begin{proof}
The first observation we make is that, due to the minimality of $R_{\delta,\lambda}$ with respect to $F_{\delta,\lambda}$, it's not possible write $R_{\delta,\lambda}$ inside a sufficiently small ball as the sum of a closed current and another current which has much smaller mass. To make this precise fix $r<r_0:=\frac{n+1}{4\lambda\Lambda}$. We claim that there exist no closed currents $X$ supported in some compact $K\subset B_r\subset \R^{m}$ with $\F(X)<1$ such that 
\begin{equation}\label{e:almostmin1}
4\Lambda^{2}\M(R_{\delta,\lambda}\res K + X) \leq \M( X) \,.
\end{equation}
Indeed, assume by contradiction that we can find a current $X$ and set $K$ with such property. By \eqref{e:almostmin4} we have 
\[ F(R_{\delta,\lambda})\leq F(R_{\delta,\lambda}+X)+\lambda\F(X)\,.\]
Since $F$ is additive on currents with disjoint supports and $X$ is supported in $K$, we can subtract $F(R_{\delta,\lambda}\res K^{c})$ from both sides to get 
\[ F(R_{\delta,\lambda}\res K ) \leq F(R_{\delta,\lambda}\res K+X)+\lambda\F(X)\,.\]
We estimate $\F(X)$ by the mass of the cone over $X$ to get 
\begin{align}
F(R_{\delta,\lambda}\res K)&\leq F(R_{\delta,\lambda}\res K +X) +\lambda \frac{r}{n+1}\M(X) \nonumber\\
&\leq \Lambda \M(R_{\delta,\lambda}\res K+X)+\lambda\frac{r}{n+1}\M(X)\nonumber\\
&< \frac{1}{2\Lambda}\M(X)\,, \label{e:almostmin3}
\end{align}
by \eqref{e:almostmin1} and the assumption on $r$. Observe that 
\[ 4\Lambda^{2}\M(R_{\delta,\lambda}\res K+X)\leq \M(X)\leq \M(X+R_{\delta,\lambda}\res K)+\M(R_{\delta,\lambda}\res K)\,,\]
and so 
\[ (4\Lambda^{2}-1)\M(R_{\delta,\lambda}\res K+X)\leq \M(R_{\delta,\lambda}\res K)\,.\]
Since we assume without loss of generality that $\Lambda>1$, this implies
\[ \M(X ) \leq \M(R_{\delta,\lambda}\res K+X)+ \M(R_{\delta,\lambda}\res K) \leq 2\M(R_{\delta,\lambda}\res K )\leq 2\Lambda F(R_{\delta,\lambda}\res K) \,.\]
Plugging into \eqref{e:almostmin3} yields the contradiction 
\[ F(R_{\delta,\lambda} \res K ) < F(R_{\delta,\lambda}\res K) \,.\]
Consequently, when $r$ is small enough, any integral current $X$ as in the definition satisfies additionally 
\[ \M(X) \leq 4\Lambda^{2}\M(R_{\delta,\lambda}\res K+X) \,.\]
Combining this with \eqref{e:almostmin4} we immediately get 
\begin{align*}
 F(R_{\delta,\lambda})&\leq F(R_{\delta,\lambda}+X) +\lambda\F(X) \leq F(R_{\delta,\lambda}+X)+\lambda \frac{r}{n+1}\M(X) \\&\leq F(R_{\delta,\lambda}+X)+\frac{4\Lambda^{2}\lambda}{n+1}r\M(R_{\delta,\lambda}\res K+X)\,,
 \end{align*} 
proving $C$-almost minimality with $C = \frac{4\Lambda^{2}\lambda}{n+1}$. 
%
\end{proof}

\subsection{Proof of Proposition \ref{th1_nonquant}: Hausdorff convergence and conclusion}
\begin{lemma} \label{l:hausdorffconvergence} Assume $R_i$ is a sequence of integral currents which are $C-$almost minimizing for an elliptic parametric functional $F$ and which converge flat to $\Sigma$. Then 
\[\lim_{i\to\infty}\sup_{x\in spt(R_i)}\{\mbox{\rm dist}(x,\Sigma)\}= 0.\]
\end{lemma}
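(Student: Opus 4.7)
The plan is to argue by contradiction. Suppose the conclusion fails: there exist $\eta>0$, a subsequence (not relabeled) of $R_i$, and points $x_i\in\mbox{\rm spt}(R_i)$ with $\mbox{\rm dist}(x_i,\Sigma)\geq\eta$. Up to a further subsequence $x_i\to x_\infty$, so that $B_\eta(x_\infty)\cap\Sigma=\emptyset$. The strategy is to confront a uniform interior density lower bound at $x_i$ (forced by $C$-almost minimality) with a vanishing mass estimate on a fixed ball around $x_\infty$ (coming from the smallness of the flat filling of $R_i-\Sigma$).

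The first ingredient is a uniform density lower bound. Since the $R_i$ are $C$-almost minimizers of an elliptic parametric integrand with $C$ independent of $i$, the standard regularity theory for almost minimizers (cf.\ \cite{DS}; the argument is a monotonicity-type cone comparison based directly on \eqref{e:almostminimizing} and \eqref{lambda}) yields constants $c_0,r_0>0$, depending only on $\Lambda$ and $C$, such that $\mathbf{M}(R_i\res B_r(x))\geq c_0 r^n$ for every $x\in\mbox{\rm spt}(R_i)$ and every $r\leq r_0$. Replacing $\eta$ by $\min(\eta,4r_0)$ (which does not affect the contradiction hypothesis) we may assume $\eta/8\leq r_0$, hence $\mathbf{M}(R_i\res B_{\eta/8}(x_i))\geq c_0(\eta/8)^n$ uniformly in $i$.

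Next I construct an admissible competitor that removes $R_i$ from a fixed ball around $x_\infty$ at the cost of a small correction coming from the flat filling. In the paper's setting the $R_i$ are homologous to $\Sigma$, so $R_i-\Sigma$ is a cycle and $\mathbb{F}(R_i-\Sigma)\to 0$; Lemma~\ref{lemmaflat} then provides integral $(n+1)$-currents $S_i$ with $\partial S_i=R_i-\Sigma$ and $\mathbf{M}(S_i)=\mathbb{F}(R_i-\Sigma)\to 0$. Slicing $S_i$ by $d_{x_\infty}(\cdot):=|\cdot-x_\infty|$, the standard estimate of Lemma 28.5 in~\cite{Sim83} yields $\rho_i\in(\eta/4,\eta/2)$ with $T_i:=\langle S_i,d_{x_\infty},\rho_i\rangle$ integral and $\mathbf{M}(T_i)\leq (4/\eta)\mathbf{M}(S_i)\to 0$. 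Since both $\Sigma$ and $\mbox{\rm spt}(\partial R_i)=\mbox{\rm spt}(\partial\Sigma)$ are disjoint from $B_{\rho_i}(x_\infty)$, a direct boundary computation (using $\partial\langle S_i,d_{x_\infty},\rho_i\rangle=-\langle\partial S_i,d_{x_\infty},\rho_i\rangle$) shows that
\[X_i:=-T_i-R_i\res B_{\rho_i}(x_\infty)\]
is a closed integral $n$-current supported in $\overline{B_{\rho_i}(x_\infty)}$. Moreover, the identity $\partial(-S_i\res B_{\rho_i}(x_\infty))=X_i$ gives $\mathbb{F}(X_i)\leq\mathbf{M}(S_i)\to 0$, so in particular $\mathbb{F}(X_i)<1$ for $i$ large.

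Plugging $X_i$ into~\eqref{e:almostminimizing} with $K=\overline{B_{\rho_i}(x_\infty)}\subset B_{r}(x_\infty)$ for some $r\leq\eta$, using the subadditivity and splitting properties of $F$ recalled in \S\ref{s:Parametric integrands} together with the identity $R_i\res K+X_i=-T_i$ and the upper bound in~\eqref{lambda}, one obtains
\[\Lambda^{-1}\mathbf{M}(R_i\res B_{\rho_i}(x_\infty))\leq F(R_i\res B_{\rho_i}(x_\infty))\leq (\Lambda+C\eta)\,\mathbf{M}(T_i)\longrightarrow 0.\]
Since $|x_i-x_\infty|\to 0$, for $i$ large we have $B_{\eta/8}(x_i)\subset B_{\rho_i}(x_\infty)$, and the density lower bound forces $c_0(\eta/8)^n\leq\mathbf{M}(R_i\res B_{\rho_i}(x_\infty))\to 0$, the desired contradiction. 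The main technical point, which dictates the precise shape of $X_i$, is to arrange \emph{simultaneously} closedness of the competitor, smallness of its flat norm, and vanishing of $\mathbf{M}(R_i\res K+X_i)$; all three requirements are met precisely because $-S_i\res B_{\rho_i}(x_\infty)$ is an explicit $(n+1)$-dimensional filling of $X_i$ whose mass tends to zero.
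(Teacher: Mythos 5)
Your proposal is correct and follows essentially the same route as the paper: the same uniform density lower bound from \cite{DS}, the same minimal fillings $S_i$ from Lemma~\ref{lemmaflat}, the same sliced competitor (your $X_i=-T_i-R_i\res B_{\rho_i}$ is exactly the paper's $-\partial(S_i\res B_{\rho_i})$), and the same confrontation of the two estimates. The only cosmetic differences are that you centre the balls at a limit point $x_\infty$ rather than at the points $x_i$ themselves (which is unnecessary and would require justifying that the $x_i$ stay in a compact set), and that the choice of $\rho_i$ with $R_i\res\partial B_{\rho_i}=0$ is left implicit.
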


\begin{proof}
Assume by contradiction, up to passing to a suitable subsequence, that there exists $r>0$ such that for every $i\in\N$ there exists $x_i$ in the support of $R_i$ satisfying $$\text{dist}(x_i, \Sigma)>2r.$$
Without loss of generality we can assume $2r<r_0$. By the density lower bound (see Lemma 2.1 in \cite{DS}) there exists $D>0$ such that
\begin{equation}\label{haus2}
F(R_i\res B_r(x_i))\geq D r^n, 
\end{equation}
for every $i\in\N$. This contradicts the almost minimality of $R_i$ for $i$ large enough.
Indeed, since $\F(R_i-\Sigma)\to 0$, by Lemma \ref{lemmaflat} we can find integral $(n+1)-$currents $S_i$ satisfying 
\[ \partial S_i =R_i-\Sigma\, \quad \text{ and }\quad  \M(S_i) =\F(R_i-\Sigma) \to 0\,.\]
We consider the slices $\langle S_i,d_i,\rho_i\rangle$ of $S_i$ with respect to the distance function $d_i(x):=|x-x_i|$ for some $\rho_i\in (r,2r) $ satisfying $R_i\res \partial B_{\rho_i}(x_i) = 0$ and
\[ \langle S_i,d_i,t_i\rangle = \partial(S_i\res B_{\rho_i}(x_i))-\partial S_i\res B_{\rho_i}(x_i) \quad \text{ and }\quad \M(\langle S_i,d_i,t_i\rangle) \leq \frac{\M(S_i)}{r}\,.\]
Set $X_i = -\partial(S_i\res B_{\rho_i}(x_i))$. Then $X_i$ is supported in $\overline{B_{\rho_i}}(x_i)\subset B_{2r}(x_i)$ and $\F(X_i)\leq \M(S_i) <1$ if $i$ is large enough. Hence, by the almost minimality, it holds 
\[ F(R_i)\leq F(R_i+X_i) +C(2r)\M(R_i\res \overline{B_{\rho_i}}(x_i)+X_i)\,.\]
 We observe
\begin{align*}
R_i+X_i &= \partial S_i +\Sigma -\partial (S_i\res B_{\rho_i}(x_i))=\partial S_i\res (B_{\rho_i}(x_i))^{c}+\Sigma - \langle S_i,d_i,\rho_i\rangle \\
&=R_i\res(B_{\rho_i}(x_i))^{c}-\langle S_i,d_i,\rho_i\rangle\,,
\end{align*}
and 
\begin{align*} R_i\res \overline{B_{\rho_i}}(x_i) +X_i &= R_i+X_i - R_i\res (\overline{B_{\rho_i}}(x_i))^{c} 
= R_i\res(B_{\rho_i}(x_i))^{c}-R_i\res (\overline{B_{\rho_i}}(x_i))^{c}-\langle S_i,d_i,\rho_i\rangle\\
&= R_i\res \partial B_{\rho_i}(x_i) - \langle S_i,d_i,\rho_i\rangle =- \langle S_i,d_i,\rho_i\rangle\,,
\end{align*}
since $R_i\res \partial B_{\rho_i}(x_i) = 0$. Consequently, we find
\begin{align*}
F(R_i) &\leq F(R_i\res(B_{\rho_i}(x_i))^{c}) +F( \langle S_i,d_i,\rho_i\rangle)\\
&\leq F(R_i)-F(R_i\res B_{\rho_i}(x_i)) +(\Lambda+2Cr)\M(\langle S_i,d_i,\rho_i\rangle)
\end{align*}
Observe that $B_r(x_i)\subset B_{\rho_i}(x_i)$ and hence with \eqref{haus2} we infer 
\begin{align*} F(R_i\res B_{\rho_i}(x_i)) &\geq \Lambda^{-1}\M(R_i\res B_{\rho_i}(x_i)) \geq \Lambda^{-1}\M(R_i\res B_{r}(x_i))\geq \Lambda^{-2}F(R_i\res B_r(x_i)) \\&\geq \Lambda^{-2}Dr^{n}\,.\end{align*}
If $I$ is so large that for any $i\geq I$ we have 
\[\M(\langle S_i,d_i,\rho_i\rangle) < \frac{1}{2}\Lambda^{-2}(\Lambda +2Cr)^{-1}Dr^{n}\,,\]
then we find the contradiction 
\[F(R_i) \leq F(R_i)-\Lambda^{-2}Dr^{n}+(\Lambda+2Cr)\M(\langle S_i,d_i,\rho_i\rangle) < F(R_i)\,.\qedhere\]
\end{proof}

Consequently, applying Lemma \ref{l:hausdorffconvergence} to the sequence  $(R_{\delta_i, \lambda})_{i\geq 1}$ for some $\lambda> \lambda_0$ and $\delta_i\searrow 0$, we get  a sequence  of integral currents which are supported in arbitrarily small tubular neighbourhoods of $\Sigma$ and satisfy 
$$F(R_{\delta_i, \lambda})\leq F(\Sigma),$$
and this implies, by Theorem 2 of \cite{Wh}, that $R_{\delta_i, \lambda}=\Sigma$, for $i$ sufficiently large, which is the final contradiction.\\

\section{Proof of Theorem \ref{th1_w}: Quantitative Estimate}\label{s:quant}
To prove the quantitative estimate \eqref{e:main_estimate} we will redo most of the proof in the previous section but slightly change the penalized functional $F_{\delta,\lambda}$. We again argue by contradiction and assume that for every $\delta>0$ there exists $\tilde S_\delta\neq\Sigma$ which is homologous to $\Sigma$, has flat distance $\tilde\eta_\delta:= \F(\tilde S_\delta- \Sigma) < \delta$ from $\Sigma$  and satisfies 
\begin{equation}\label{defstildadelta}
F(\tilde S_\delta)\leq F(\Sigma)+C(\F(\tilde S_\delta-\Sigma))^{2}\,.
\end{equation}

The constant $C$ has to be thought as fixed for the moment and it will be chosen only at the end of the proof. Define
the penalized functional $\tilde F_{\delta,\lambda}$ as 
\begin{equation}\label{defSdelta}
\tilde F_{\delta,\lambda} (T) := F(T) +\lambda (\F(T-\Sigma)-\tilde \eta_\delta)^{2}\,.
\end{equation}
The semicontinuity of $\tilde F_{\delta,\lambda}$ follows immediately from the semicontinuity of $F$. Therefore we can again consider minimizers $\tilde R_{\delta,\lambda}$ of the penalized functional $\tilde F_{\delta,\lambda}$ among all integral currents homologous to $\Sigma$. By repeating the steps of the previous section we can find a nice subsequence converging to $\Sigma$.
\begin{lemma}\label{l:rightsequence} 
 There exists $\lambda_0 >0$ such that for any $\lambda\geq\lambda_0$ there is a sequence $\delta_i\searrow 0$ such that $\F(\tilde R_{\delta_i,\lambda}-\Sigma)\to 0 $ for $i \to +\infty$.
\end{lemma}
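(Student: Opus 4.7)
My plan is to mirror the structure of Section 3, replacing the linear penalization by its quadratic analogue and adapting Lemmas \ref{lemmaconv} and \ref{lambdazero} accordingly. The key object will be the ``limit'' functional
\[
\tilde F_{0,\lambda}(T) := F(T) + \lambda\,\F(T-\Sigma)^{2},
\]
and the argument proceeds in three steps.

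First, I would establish compactness. Fixing $\lambda>0$ and a sequence $\delta_i\searrow 0$, every $\tilde R_{\delta_i,\lambda}$ is homologous to $\Sigma$ and thus has boundary equal to $\partial\Sigma$. The minimality inequality $\tilde F_{\delta_i,\lambda}(\tilde R_{\delta_i,\lambda})\le \tilde F_{\delta_i,\lambda}(\Sigma)=F(\Sigma)+\lambda\tilde\eta_{\delta_i}^{2}$, combined with the ellipticity bound $F\ge \Lambda^{-1}\M$, yields a uniform mass bound on the sequence. Federer--Fleming compactness then produces, along a subsequence which I still index by $\delta_i$, a flat limit $\tilde R_\lambda$ that is again an integral current homologous to $\Sigma$.

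Next, I would prove the analogue of Lemma \ref{lemmaconv}: this flat limit $\tilde R_\lambda$ minimizes $\tilde F_{0,\lambda}$ among currents homologous to $\Sigma$. Since $\F(\tilde R_{\delta_i,\lambda}-\Sigma)\to \F(\tilde R_\lambda-\Sigma)$ and $\tilde\eta_{\delta_i}\to 0$, the quadratic penalization term passes to the limit, while $F$ is lower semicontinuous with respect to flat convergence. Comparing with any competitor $S$ homologous to $\Sigma$, for which also $\tilde F_{\delta_i,\lambda}(S)\to \tilde F_{0,\lambda}(S)$, the inequality $\tilde F_{\delta_i,\lambda}(\tilde R_{\delta_i,\lambda})\le \tilde F_{\delta_i,\lambda}(S)$ passes to the liminf and gives $\tilde F_{0,\lambda}(\tilde R_\lambda)\le\tilde F_{0,\lambda}(S)$, exactly as in the proof of Lemma \ref{lemmaconv}.

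Finally, I would show the analogue of Lemma \ref{lambdazero}: for $\lambda$ large enough, the unique minimizer of $\tilde F_{0,\lambda}$ among currents homologous to $\Sigma$ is $\Sigma$ itself. Let $\varepsilon>0$ be as in Proposition \ref{th1_nonquant} and set $\lambda_0:=F(\Sigma)/\varepsilon^{2}$. For any $\lambda\ge\lambda_0$ the inequality $\tilde F_{0,\lambda}(\tilde R_\lambda)\le \tilde F_{0,\lambda}(\Sigma)=F(\Sigma)$ together with $F\ge 0$ forces
\[
\F(\tilde R_\lambda-\Sigma)\le \sqrt{F(\Sigma)/\lambda}\le\varepsilon\qquad\text{and}\qquad F(\tilde R_\lambda)\le F(\Sigma).
\]
If $\tilde R_\lambda\ne\Sigma$, Proposition \ref{th1_nonquant} would then give $F(\tilde R_\lambda)>F(\Sigma)$, a contradiction; hence $\tilde R_\lambda=\Sigma$, so $\F(\tilde R_{\delta_i,\lambda}-\Sigma)\to 0$ along the chosen subsequence. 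The main -- and rather mild -- obstacle is that the quadratic penalization gives only $\F(\tilde R_\lambda-\Sigma)=O(\lambda^{-1/2})$ rather than the $O(\lambda^{-1})$ rate obtained in Lemma \ref{lambdazero} via Lemma \ref{calibra}; but this slower rate is still sufficient to enter the flat neighbourhood where Proposition \ref{th1_nonquant} applies, so no refinement of the calibration estimate is needed at this stage.
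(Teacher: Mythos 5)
Your proposal is correct and follows essentially the same route as the paper: the limit functional $\tilde F_{0,\lambda}(T)=F(T)+\lambda\F(T-\Sigma)^2$, the passage to the limit as in Lemma \ref{lemmaconv}, and the choice $\lambda_0=F(\Sigma)/\varepsilon^2$ so that any minimizer other than $\Sigma$ lands in the flat neighbourhood where Proposition \ref{th1_nonquant} applies and yields a contradiction. Your closing observation that the $O(\lambda^{-1/2})$ rate suffices and that no analogue of Lemma \ref{calibra} is needed here matches exactly what the paper does.
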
 
\begin{proof} 
The argument is again divided into two parts: firstly, any integral current $\tilde R_\lambda $ which is a subsequential limit of $\tilde  R_{\delta,\lambda}$ is a minimizer of 
\[ \tilde F_{0,\lambda} (T) := F(T)+\lambda \F(T-\Sigma)^{2}\,.\]
This part is entirely similar to the one in Lemma \ref{lemmaconv}: let $\tilde R_{\lambda}, \tilde R_{\delta_i,\lambda} $ be such that $\F(\tilde R_{\delta_i,\lambda}-\tilde R_\lambda ) \to 0$ and assume by contradiction that there exists $\varepsilon>0$ and a $n$-dimensional integral current $S$ homologous to $\Sigma$ such that
\begin{equation}\label{defS}
\tilde F_{0,\lambda}(S)<\tilde F_{0,\lambda}(\tilde R_\lambda)-2\varepsilon.
\end{equation}
Consider an integer $N$ such that $\lambda\tilde \eta_{\delta_M}^2\leq\varepsilon$ for every $M\geq N$. Since by the semicontinuity of $\tilde F_{\delta,\lambda}$ we have 
\begin{align*}
\tilde F_{0,\lambda}(\tilde R_\lambda)&= F(\tilde R_{\lambda})+\lambda\mathbb{F}(\tilde R_{\lambda}-\Sigma)^{2}\\
&\leq\liminf_{i\to\infty}\{F(\tilde R_{\delta_i,\lambda})+\lambda(\mathbb{F}(\tilde R_{\delta_i,\lambda}-\Sigma)-\tilde \eta_{\delta_i})^2\}\\
&=\liminf_{i\to\infty} \tilde F_{\delta_i,\lambda}(\tilde R_{\delta_i,\lambda})\,,
\end{align*} 
we get for arbitrary $M\geq N$ the contradiction 
\begin{align*}
\tilde F_{\delta_M,\lambda}(S)&= F(S)+\lambda(\mathbb{F}(S-\Sigma)-\tilde \eta_{\delta_M})^2\\
&=  F(S)+\lambda(\mathbb{F}(S-\Sigma))^2+\lambda\tilde \eta_{\delta_M}^2-2\lambda\F(S-\Sigma)\tilde\eta_{\delta_M}\\
&\leq  F(S)+\lambda(\mathbb{F}(S-\Sigma))^2+\varepsilon=\tilde F_{0,\lambda}(S)+\varepsilon\stackrel{\eqref{defS}}{<}\tilde F_{0,\lambda}(\tilde R_\lambda)-\varepsilon\\
&\leq \liminf_{i\to\infty} \tilde F_{\delta_i,\lambda}(\tilde R_{\delta_i,\lambda})-\varepsilon,\leq \liminf_{i\to\infty} \tilde F_{\delta_i,\lambda}(S)-\varepsilon\,.
\end{align*}
Secondly, we claim that if $\lambda\geq \lambda_0 $ is large enough then the only minimizer of $ \tilde F_{0,\lambda}$ is $\Sigma$ itself. Consider $\lambda>\frac{F(\Sigma)}{\varepsilon^2}$, where $\varepsilon$ is the value defined in Proposition \ref{th1_nonquant} and assume by contradiction that there exists $S_\lambda\neq\Sigma$ such that $\tilde F_{0,
\lambda}(S_\lambda)\leq \tilde F_{0,\lambda}(\Sigma)$. Then we have
\begin{equation}\label{e:rudolph}
\tilde F_{0,\lambda}(S_\lambda)= F(S_\lambda)+\lambda (\F(S_\lambda-\Sigma))^2> F(\Sigma)\frac{(\F(S_\lambda-\Sigma))^2}{\varepsilon^2}.
\end{equation}
From the last inequality, since $\tilde F_{0,\lambda}(S_\lambda)\leq \tilde F_{0,\lambda}(\Sigma)=F(\Sigma)$, it follows that $\F(S_\lambda-\Sigma)<\varepsilon$. On the other hand, if $\F(S_\lambda-\Sigma)<\varepsilon$, by Proposition \ref{th1_nonquant}, it follows that $F(\Sigma)<F(S_\lambda)$, hence
$$\tilde F_{0,\lambda}(\Sigma)=F(\Sigma)< F(S_\lambda)< F(S_\lambda)+\lambda(\F(S_\lambda-\Sigma))^2=\tilde F_{0,\lambda}(S_\lambda),$$
which is a contradiction.
\end{proof}

At this point, for every $\lambda>\lambda_0$ we have a sequence $(\tilde R_{\delta_i,\lambda})$ of integral currents which are homologous to $\Sigma$ and converge flat to it. As before, the  $(\tilde R_{\delta_i,\lambda})$ are almost minimizers if $i$ is large enough: this follows as in Lemma \ref{lemmalambdamin}, once we observe that for any closed $n$-current $X$ with support in a compact $K\subset B_r(x_0)$ and with  bound $\F(X)<1$ we have the following inequality 
\begin{equation}\label{e:amproperty}
F( \tilde R_{\delta_i,\lambda}) \leq F(\tilde R_{\delta_i,\lambda}+X) +2\lambda\F(X) \,. 
\end{equation}
The latter inequality on the other hand is a consequence of 
\begin{align*} F(\tilde R_{\delta_i,\lambda}) +\lambda(\F(\tilde R_{\delta_i,\lambda}-\Sigma)-\tilde\eta_{\delta_i})^{2}&=\tilde F_{\delta_i,\lambda}(\tilde R_{\delta_i,\lambda})
\leq \tilde F_{\delta_i,\lambda}(\tilde R_{\delta_i,\lambda}+X)\\&= F(\tilde R_{\delta_i,\lambda}+X) +\lambda(\F(\tilde R_{\delta_i,\lambda}+X-\Sigma)-\tilde \eta_{\delta_i})^{2}\,,\end{align*}
since this implies
\begin{align*} 
F( \tilde R_{\delta_i,\lambda} ) \leq & F(\tilde R_{\delta_i,\lambda}+X) 
        -2\lambda \tilde \eta_{\delta_i}\left (\F(\tilde R_{\delta_i,\lambda_0}+X-\Sigma)-\F(\tilde R_{\delta_i,\lambda_0}-\Sigma)\right ) \\         
       & +\lambda \left (\F(\tilde R_{\delta_i,\lambda}+X-\Sigma)+\F(\tilde R_{\delta_i,\lambda}-\Sigma)\right ) \left(\F(\tilde R_{\delta_i,\lambda_0}+X-\Sigma)-\F(\tilde R_{\delta_i,\lambda_0}-\Sigma)\right )\\
     \leq& F(\tilde R_{\delta_i,\lambda}+X) -2\lambda \tilde \eta_{\delta_i}\left (\F(\tilde R_{\delta_i,\lambda_0}+X-\Sigma)-\F(\tilde R_{\delta_i,\lambda_0}-\Sigma)\right )\\ &+\lambda \F(X) \left (\F(\tilde R_{\delta_i,\lambda}+X-\Sigma)+\F(\tilde R_{\delta_i,\lambda}-\Sigma)\right) \\     
\leq & F( \tilde R_{\delta_i,\lambda_0}+X)+\lambda\F(X)\left ( \F(X) +2\tilde \eta_{\delta_i} +2\F(\tilde R_{\delta_i,\lambda}-\Sigma)\right )\,,\end{align*}
which yields \eqref{e:amproperty} for $i$ large enough.
The Hausdorff convergence of $R_{\delta_i,\lambda}$ to $\Sigma$ then follows again from Lemma \ref{l:hausdorffconvergence}.\\
The almost minimality of the members of the sequence and consequent Hausdorff convergence together with the stability of $\Sigma$ now imply that $\tilde R_{\delta_i,\lambda} = \Sigma$ for $i$ large enough. In particular we will use the well known fact that, for $i$ sufficiently large, we can write $\tilde R_{\delta_i,\lambda}$ as a graph of normal vectorfield $u_i$ over $\Sigma$, converging to $0$ in $C^{1,\beta}$, up to the boundary (see \cite{DS}). In turn, this fact implies that, for $i$ sufficiently large, it holds 
$$\|u_i\|^2_{W^{1,2}}\geq c_0\|u_i\|^2_{L^1}\geq c_1\left (\F(\tilde R_{\delta_i,\lambda}-\Sigma)\right )^2,$$
for some $c_1>0$ (depending on $\Sigma$). Indeed the first inequality is just H\"older's inequality and the second is due to the bound on the $C^1$-norm of $u_i$. Now we compute
\[ \tilde F_{\delta_i,\lambda}(\tilde R_{\delta_i,\lambda}) \leq \tilde F_{\delta_i,\lambda}(\tilde S_{\delta_i}) \stackrel{\eqref{defSdelta}}{=}F(\tilde S_{\delta_i})\stackrel{\eqref{defstildadelta}}{\leq} F(\Sigma)+C(\F(\tilde S_{\delta_i}-\Sigma))^{2}\,,\]
and by the stability of $\Sigma$, denoting $4\eta_0$ the (strictly positive) minimal eigenvalue of the Jacobi operator of $F$ for $\Sigma$, we get, for every $\lambda>\lambda_0$ and for $i$ sufficiently large, 
\[ F(\tilde R_{\delta_i,\lambda})\geq F(\Sigma) +\eta_0\|u_i\|_{W^{1,2}}^{2} \geq F(\Sigma)+\eta_0c_1(\F(\tilde R_{\delta_i,\lambda}-\Sigma))^{2}\,.\] 
We deduce that, for $i$ sufficiently large,
 \begin{align*} F(\Sigma)+\eta_0c_1(\F(\tilde R_{\delta_i,\lambda}-\Sigma))^{2} +\lambda (\F(\tilde R_{\delta_i,\lambda}-\Sigma)-\tilde \eta_{\delta_i})^{2}&\leq F(\tilde R_{\delta_i,\lambda})+\lambda (\F(\tilde R_{\delta_i,\lambda}-\Sigma)-\tilde \eta_{\delta_i})^{2} \\&=\tilde F_{\delta_i,\lambda}(\tilde R_{\delta_i,\lambda}) \leq F(\Sigma)+C\tilde \eta_{\delta_i}^{2}\,.\end{align*}
 Since $\tilde \eta_{\delta_i}^{2}\leq 2(\tilde \eta_{\delta_i}-\F(\tilde R_{\delta_i,\lambda}-\Sigma))^{2}+2(\F(\tilde R_{\delta_i,\lambda}-\Sigma))^{2}$ we infer 
 \[ \eta_0c_1(\F(\tilde R_{\delta_i,\lambda}-\Sigma))^{2} +\lambda (\F(\tilde R_{\delta_i,\lambda}-\Sigma)-\tilde \eta_{\delta_i})^{2}\leq 2C((\tilde \eta_{\delta_i}-\F(\tilde R_{\delta_i,\lambda}-\Sigma))^{2}+(\F(\tilde R_{\delta_i,\lambda}-\Sigma))^{2})\,,\]
or equivalently 
\[ (\eta_0c_1-2C)(\F(\tilde R_{\delta_i,\lambda}-\Sigma))^{2}\leq (2C-\lambda)(\tilde \eta_{\delta_i}-\F(\tilde R_{\delta_i,\lambda}-\Sigma))^{2}\,,\]
which is not possible for $C=C(\eta_0)$ small enough and $\lambda$ large enough. 

\subsection{Final remarks}
In \cite{Wh}, the author proves that when $\Sigma$ is an unstable but non-degenerate critical point, then it is the unique solution to a suitable minimax problem in a certain tubular neighbourhood. The corresponding statement in our setting is the following.

\begin{theorem}\label{th2}
Let $\Sigma^n\subset M^m$ be a smooth, embedded, compact submanifold with (possibly empty) boundary. Suppose also that $\Sigma$ is a nondegenerate critical point with index $k>0$ for a smooth parametric elliptic functional $F$. Then there exist $\varepsilon>0$, $\delta>0$, $C>0$ and a 
smooth $k$-parameter family $(\Sigma_v)_{v\in \overline{B}^k_\varepsilon}$ of embedded surfaces, each homologous to $\Sigma=:\Sigma_0$, with the following properties:
\begin{enumerate}
 \item for every $v\in \overline{B}^k_\varepsilon$ it holds $\F(\Sigma_v-\Sigma)<\delta$ and
$$F(\Sigma_v)-F(\Sigma_0)\leq-\varepsilon|v|^2;$$
 \item if $(\widetilde{\Sigma}_v)_{v\in \overline{B}^k_\varepsilon}$ is any other $k$-parameter family of integral currents, each homologous to $\Sigma$, which is continuous (with respect to the weak$^*$ convergence of currents) 
and satisfies 
$$\F(\tilde\Sigma_v-\Sigma)<\delta\quad\mbox{ and }\quad\widetilde{\Sigma}_v=\Sigma_v, \mbox{ for } v\in\partial B^k_\varepsilon,$$ 
then
\begin{equation}\label{eq:conj}
\sup_v \{F(\widetilde{\Sigma}_v)-C(\mathbb{F}(\widetilde{\Sigma}_v-\Sigma))^2\}\geq F(\Sigma).
\end{equation}
\end{enumerate}
\end{theorem}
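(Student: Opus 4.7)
\textbf{Construction of $(\Sigma_v)$.} Let $\phi_1,\dots,\phi_k$ be an $L^2$-orthonormal family of normal vectorfields that are eigenfunctions of the Jacobi operator $J$ associated to the $k$ negative eigenvalues (counted with multiplicity) and that vanish on $\partial\Sigma$. For $v\in\overline{B}^k_\varepsilon$ set $u_v:=\sum_i v_i\phi_i$ and define $\Sigma_v$ as the normal graph $\exp_\Sigma(u_v)$, which for $\varepsilon$ small is smooth, embedded and homologous to $\Sigma$. The second variation expansion
\[
F(\Sigma_v)-F(\Sigma)=\tfrac12\langle Ju_v,u_v\rangle+O(|v|^3)\leq -\varepsilon|v|^2
\]
(after possibly shrinking $\varepsilon$) yields property (1).

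\textbf{A degree-theoretic reduction for (2).} I would argue by contradiction, supposing the existence of a weak$^*$-continuous family $(\widetilde\Sigma_v)$ matching $(\Sigma_v)$ on $\partial B^k_\varepsilon$ and satisfying $\sup_v\{F(\widetilde\Sigma_v)-C(\F(\widetilde\Sigma_v-\Sigma))^2\}<F(\Sigma)$. I would choose smooth $n$-forms $\omega_1,\dots,\omega_k$ on $M$, supported in a tubular neighbourhood of $\Sigma$, dual to the $\phi_i$ in the sense that $\langle \Sigma_v-\Sigma,\omega_i\rangle = v_i+O(|v|^2)$; such forms are constructed in the exponential chart by pairing the normal area form with the $d\phi_i$. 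Define $\Psi:\overline{B}^k_\varepsilon\to\R^k$ by $\Psi_i(v):=\langle\widetilde\Sigma_v-\Sigma,\omega_i\rangle$. Weak$^*$-continuity of the family and smoothness of the forms give continuity of $\Psi$; on $\partial B^k_\varepsilon$ we have $\Psi(v)=v+O(|v|^2)\neq 0$ for $\varepsilon$ small, so $\Psi|_{\partial B^k_\varepsilon}$ has topological degree $1$ and by Brouwer's theorem there is $v_0\in B^k_\varepsilon$ with $\Psi(v_0)=0$. Intuitively, $\widetilde\Sigma_{v_0}$ has vanishing component along the unstable subspace.

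\textbf{Quantitative stability at $v_0$.} The hard part is to convert $\Psi(\widetilde\Sigma_{v_0})=0$ into a bound $F(\widetilde\Sigma_{v_0})\geq F(\Sigma)+C'(\F(\widetilde\Sigma_{v_0}-\Sigma))^2$ with $C'>C$, yielding the desired contradiction. The plan is to rerun the argument of Sections 3 and \ref{s:quant} with the modified functional
\[
F^\ast(T):=F(T)+\mu|\Psi(T)|^2,\qquad \mu\gg 1,
\]
chosen large enough that its second variation at $\Sigma$ is strictly positive: the quadratic penalty dominates the $k$ negative eigenvalues of $J$, making $\Sigma$ strictly stable for $F^\ast$. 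Since $|\Psi(T)|^2$ is a smooth, weak$^*$-continuous, quadratic term of lower order in $\F$ than $F$, the penalized-functional construction of Section \ref{s3.1} and the almost-minimality/Hausdorff-convergence arguments (Lemmas \ref{lemmalambdamin} and \ref{l:hausdorffconvergence}) adapt with only cosmetic modifications. The resulting analogue of Theorem \ref{th1_w} for $F^\ast$, applied to $T=\widetilde\Sigma_{v_0}$ (where the penalty vanishes), produces the required estimate, and selecting $C$ below the constant thus obtained closes the contradiction. The principal technical obstacle is to verify that almost-minimizers of the non-parametric functional $F^\ast$ still enjoy the graph regularity needed for the strict stability of $\Sigma$ (for $F^\ast$) to pin them onto $\Sigma$, and to carry out the explicit construction of the detecting forms $\omega_i$ so that $\Psi$ genuinely measures the unstable projection.
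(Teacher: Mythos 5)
Your overall strategy coincides with the one the paper (following \cite{Wh}) uses: build $\Sigma_v$ from the negative eigenfunctions, pass to a modified functional for which $\Sigma$ becomes strictly stable, run the quantitative minimality theorem for that functional, and use a degree argument to locate a parameter $v_0$ at which the modification costs nothing, so that the estimate transfers back to $F$. The genuine difference is the choice of auxiliary functional: the paper takes White's $G(T)=F(T)+\phi\bigl(\int fF\,d\|T\|\bigr)$ and reduces Theorem \ref{th2} to Theorem \ref{th1_w} for $G$, whereas you take $F^\ast(T)=F(T)+\mu|\Psi(T)|^2$ with $\Psi(T)=(\langle T-\Sigma,\omega_i\rangle)_i$ for dual forms $\omega_i$. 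Your penalty has the pleasant feature of being Lipschitz in the flat norm (writing $T-S=P+\partial Q$ and pairing $Q$ with $d\omega_i$ gives $|\Psi(T)-\Psi(S)|\leq C\,\F(T-S)$), so the penalized minimizers are almost minimizing for the \emph{parametric} functional $F$ by exactly the argument of Lemma \ref{lemmalambdamin} — no frozen-integrand trick is needed, and the regularity input is still \cite{DS} applied to $F$. In that sense the obstacle you flag as ``principal'' (regularity for almost minimizers of the non-parametric $F^\ast$) is not where the difficulty lies.

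The real gap is elsewhere. The machinery of Sections 3 and \ref{s:quant} is not self-contained: it invokes Theorem 2 of \cite{Wh} — unique minimality of a strictly stable $\Sigma$ among homologous currents supported in a small \emph{tubular} (not flat) neighbourhood — for the functional being penalized, namely in the proof of Lemma \ref{calibra} (hence of Lemma \ref{lambdazero} and of the uniqueness of the minimizer of $\tilde F_{0,\lambda}$ via Proposition \ref{th1_nonquant}), and again in the endgame after Lemma \ref{l:hausdorffconvergence}. In the setting of Theorem \ref{th2} the submanifold $\Sigma$ is unstable for $F$, so all of these steps must be run for the modified functional; the paper chooses White's $G$ precisely because White's Theorem 3 supplies the tubular-neighbourhood minimality of $\Sigma$ for $G$ off the shelf, and the only genuinely new point is then the almost-minimality of the penalized minimizers of $G$ for $F$ (the frozen-integrand argument). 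For your $F^\ast$ no such tubular-neighbourhood statement exists in the literature, and it does not follow formally from positivity of the second variation of $F^\ast$ at $\Sigma$: one must handle competitors in the tubular neighbourhood that are not normal graphs, which is the content of White's extension of his Theorem 2 to the functionals $G$. Until you either prove that statement for $F^\ast$ or switch to White's $G$, the analogues of Lemma \ref{calibra} and Proposition \ref{th1_nonquant} for $F^\ast$ — and hence the ``analogue of Theorem \ref{th1_w} for $F^\ast$'' on which your contradiction rests — are unproved.
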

For a non quantitative version of this result, one can replace \eqref{eq:conj} simply with the inequality  
$\sup_v \{F(\widetilde{\Sigma}_v)\}\geq F(\Sigma)$,
with the additional information that the inequality is strict, unless $\tilde\Sigma_v=\Sigma$ for some $v$.
Following \cite{Wh}, to prove  Theorem \ref{th2}, it suffices to prove Theorem \ref{th1_w} with $F$ replaced by a functional $G$ of the form 
$$G(T):=F(T)+\phi\left(\int_{\R^d} fF\,d\|T\|\right),$$
where $f:\R^d\to\R^k$ is a continuous map and $\phi:\R^k\to\R$ is a $C^2$ function with  $f\equiv 0$ on $\Sigma$, $\phi(0)=0$ and
\begin{equation}\label{e:bounds}
\sup|D\phi|\cdot|f|<1\,.
\end{equation}

Most of our strategy goes through almost verbatim. Indeed, one can easily prove the analogue of Lemma \ref{lemmaconv}, \ref{calibra} and Lemma \ref{lambdazero}. This is achieved using the simple but crucial observation, already contained in \cite{Wh}, that if $T$ and $S$ are $n-$dimensional integral currents on $\R^{d}$ then there exists $L\in \R^{k}$ (depending on $T$ and $S$) satisfying $|L|\leq \sup|D\phi|$ and 
\begin{equation}\label{e:philinear}
\phi\left (\int_{\R^{d}} fFd\|T\| \right ) - \phi\left (\int_{\R^{d}}fF d\|S\|\right ) = \int_{\R^{d}} \langle L, f\rangle F\,d\left (\|T\|-\|S\|\right )\,.
\end{equation} 
In particular, by \eqref{e:bounds} this implies the lower-semicontinuity of $G$ and the fact that $G$ is bounded from above and below by a multiple of the mass.\\
It turns out that the minimizers $R_{\delta,\lambda}$ of the corresponding functionals $G_{\delta,\lambda}$ are again $C-$almost minimizing for the functional $F$.
Since the almost minimality is not completely obvious, we sketch a proof here. For a fixed closed current $X$ which is supported in compact set $K\subset B_r(x)$ we get from $G_{\delta,\lambda}(R_{\delta,\lambda})\leq G_{\delta,\lambda}(R_{\delta,\lambda}+X)$ and \eqref{e:philinear} that 
\[ \tilde F(R_{\delta,\lambda}) \leq \tilde F(R_{\delta,\lambda}+X) +\lambda\F(X)\,,\]
where $\tilde F(T) = \int(1+\langle L, f\rangle) F\,d\|T\|$ and $L\in\R^{k}$ with $|L|\leq \sup|D\phi|$ depends on $X$ and $R_{\delta,\lambda}$. Arguing as in Lemma \ref{lemmalambdamin} leads to the inequality 
\begin{equation}\label{e:fakeam}
\tilde F(R_{\delta,\lambda}) \leq \tilde F(R_{\delta,\lambda}+X) +Cr\M(R_{\delta,\lambda}\res K+X)\,,
\end{equation} 
from which one would be tempted to conclude the almost minimality of $R_{\delta,\lambda}$ for the elliptic, parametric integrand $\tilde F$. However, the functional $\tilde F$ depends on the fixed $X$. \\
Nevertheless, since the bounds on $\tilde F$ \emph{don't} depend on $X$ or $R_{\delta,\lambda}$,  inequality \eqref{e:fakeam} yields the following estimate for the frozen integrands: 
\[ \tilde F_x(R_{\delta,\lambda}\res K) \leq (1+Cr)\tilde F_x(R_{\delta,\lambda}\res K+X)\,.\]
Since $\tilde F_x(T) = (1+\langle L,f(x)\rangle) F_x(T)$ this in turn implies 
\[ F_x(R_{\delta,\lambda}\res K) \leq (1+Cr) F_x(R_{\delta,\lambda}\res K+X)\,.\]
Using that the integrand is smooth and that the currents considered are supported in $B_r(x)$ one deduces that, upto choosing a bigger constant $C$, this inequality remain true even when $F_x$ is replaced by $F$. From there the almost minimality follows easily.\\
Using Lemma \ref{l:hausdorffconvergence} one again concludes that, for $\delta$ small and $\lambda$ large enough,  $R_{\delta,\lambda}$ are contained in arbitrarily small tubular neighbourhoods of $\Sigma$ and satisfy $G(R_{\delta,\lambda})\leq G(\Sigma)$, contradicting  Theorem 3 of \cite{Wh}. Repeating the steps of section \ref{s:quant} yields Theorem \ref{th1_w} with $F$ replaced by the above $G$. Theorem \ref{th2} can then be proved exactly as in \cite{Wh}, but instead of using Theorem 3 of \cite{Wh} we use our quantitative Theorem \ref{th1_w} for $G$ as above.

%
%

%
%

\bibliographystyle{plain}

%
%

\vskip .5 cm

{\parindent = 0 pt\begin{footnotesize}

D.I. \& A.M.
\\
Institut f\"ur Mathematik
Mathematisch-naturwissenschaftliche Fakult\"at
Universit\"at Z\"urich\\
Winterthurerstrasse 190, CH-8057 Z\"urich\\

e-mail D.I.: {\tt dominik.inauen@math.uzh.ch}\\
e-mail A.M: {\tt andrea.marchese@math.uzh.ch}

\end{footnotesize}
}

\end{document}